\definecolor{citegreen}{rgb}{0,0.5,0.15}
\definecolor{linkblue}{rgb}{0,0.4,1}
\definecolor{citebordergreen}{rgb}{0,0.9,0.37}
\title{On non-isomorphic universal sofic groups}
\author{Vadim Alekseev}
\address{Vadim Alekseev, TU Dresden, 01062 Dresden, Germany}
\email{vadim.alekseev@tu-dresden.de}
\author{Andreas Thom}
\address{Andreas Thom, TU Dresden, 01062 Dresden, Germany}
\email{andreas.thom@tu-dresden.de}
\subjclass{20F69, 03C20, 20F10, 20D99}
\theoremstyle{plain}
\newtheorem{theorem}{Theorem}[section]
\newtheorem{definition}[theorem]{Definition}
\newtheorem{proposition}[theorem]{Proposition}
\newtheorem{lemma}[theorem]{Lemma}
\newtheorem{corollary}[theorem]{Corollary}
\newtheorem{question}[theorem]{Question}
\theoremstyle{definition}
\newtheorem{remark}[theorem]{Remark}
\newcommand{\beq}{\begin{equation}}
\newcommand{\eeq}{\end{equation}}
\newcommand{\beqn}{\begin{equation*}}
\newcommand{\eeqn}{\end{equation*}}
\newcommand{\brq}{\begin{dmath}[compact]}
\newcommand{\erq}{\end{dmath}}
\newcommand{\brqn}{\begin{dmath*}[compact]}
\newcommand{\erqn}{\end{dmath*}}
\newcommand{\bag}{\begin{align}}
\newcommand{\eag}{\end{align}}
\newcommand{\bagn}{\begin{align*}}
\newcommand{\eagn}{\end{align*}}
\newcommand{\mc}{\mathcal}
\newcommand{\mb}{\mathbb}
\newcommand{\eps}{\varepsilon}
\newcommand{\alg}{\mathrm{alg}}
\newcommand{\met}{\mathrm{met}}
\newcommand{\lacts}{\curvearrowright}
\DeclareMathOperator{\Sym}{Sym}
\DeclareMathOperator{\Alt}{Alt}
\DeclareMathOperator{\Cnt}{C}
\DeclareMathOperator{\Znt}{Z}
\DeclareMathOperator{\Rad}{Rad}
\newcommand{\vertiii}[1]{{|\kern-0.2ex|\kern-0.2ex| #1 
    |\kern-0.2ex|\kern-0.2ex|}}
\begin{document}
\begin{abstract}
We show that there are $2^{\aleph_0}$ non-isomorphic universal sofic groups. This proves a conjecture of Simon Thomas.
\end{abstract}

\maketitle

\tableofcontents

\section{Introduction}

Starting with the work of Gromov \cite{MR1694588} and Weiss \cite{MR1803462}, there has been growing interest in the class of sofic groups over the last two decades. Elek and Szab\'{o} \cite{MR2178069} were the first to characterize countable sofic groups by the property of embedability into a metric ultraproduct of symmetric groups with respect to the normalized Hamming metric; following their work these metric ultraproducts were termed \emph{universal sofic groups}. See Pestov's excellent survey for more background \cite{MR2460675}. Metric ultraproducts of finite groups and related constructions have been a subject of intense study eversince, see \cites{MR2298607, MR2607888, MR3710760, MR3846318, MR3210125, MR3162821, MR4292938, MR4232187, MR4141378, MR3966829, MR3749196, MR3299505}. It is elementary to see that the embedability of countable sofic groups does not depend on the particular choice of metric ultraproduct, i.e., on the choice of the ultrafilter. Thus, universal sofic groups cannot be distinguished by their countable subgroups. However, Thomas clarified in \cite{MR2607888} that assuming the negation of the continuum hypothesis, there must be $2^{2^{\aleph_0}}$ non-isomorphic universal sofic groups and naturally raised the question, how the situation unfolds under the assumption of the continuum hypothesis.

Our main result, which proves a conjecture by Thomas  \cite[Conjecture 1.2]{MR2607888}, is the following theorem:

\begin{theorem} \label{thm:mainintro}
There are at least $2^{\aleph_0}$ pairwise non-isomorphic metric ultraproducts $\prod^{\rm met}_{\mathcal U} ({\rm Sym}(n),d_n)$.
\end{theorem}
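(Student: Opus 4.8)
The plan is to reduce the assertion to an oscillation statement about a finitary invariant and then to produce that oscillation inside the finite symmetric groups. For a non-principal ultrafilter $\mathcal U$ on $\mathbb N$, regard $G_{\mathcal U}:=\prod^{\rm met}_{\mathcal U}(\Sym(n),d_n)$ as a group equipped with the bi-invariant metric $d_{\mathcal U}=\lim_{\mathcal U}d_n$. As a metric structure it is $\aleph_1$-saturated (every non-principal ultrafilter on $\mathbb N$ is countably incomplete), of density character at most $\mathfrak c$, and by the fundamental theorem of ultraproducts ({\L}o\'s) its complete continuous first-order theory is the $\mathcal U$-limit $\lim_{\mathcal U}\mathrm{Th}(\Sym(n),d_n)$ in the compact metrizable space of complete theories of bi-invariant metric groups. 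Since isomorphic structures have the same theory, it suffices to show that the sequence $\bigl(\mathrm{Th}(\Sym(n),d_n)\bigr)_n$ has at least $2^{\aleph_0}$ limit points; by compactness of theory space this reduces, in turn, to exhibiting one continuous first-order sentence $\sigma$ in the language of bi-invariant metric groups whose value sequence $\bigl(\sigma^{(\Sym(n),d_n)}\bigr)_n\subseteq[0,1]$ has uncountably many limit points, e.g.\ is dense in a non-degenerate subinterval. Indeed, for each $t$ in the (closed, uncountable) limit set one chooses a non-principal $\mathcal U_t$ on $\mathbb N$ with $\lim_{\mathcal U_t}\sigma^{(\Sym(n),d_n)}=t$; then $\sigma^{G_{\mathcal U_t}}=t$, so the continuum-many $G_{\mathcal U_t}$ are pairwise non-isomorphic, with no hypothesis on the continuum needed. (Conversely, under CH this is essentially forced: by Keisler's uniqueness theorem for saturated models, two $\aleph_1$-saturated metric structures of density character $\le\aleph_1=\mathfrak c$ with the same theory are isomorphic, so some such oscillating sentence must exist — exactly the point at which the CH case departs from the $\neg$CH counting of Thomas.)

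It remains to construct $\sigma$, that is, to make the value of a bounded $\sup$–$\inf$ formula in $(\Sym(n),d_n)$ into a controllably oscillating function of $n$. The natural device is to read off a ``scale'' of $n$ from the group: for $g=(g_n)_{\mathcal U}$ one has $1-d_{\mathcal U}(g^m,1)=\lim_{\mathcal U}\tfrac1n\sum_{e\mid m}e\,c_e(g_n)$, where $c_e(g_n)$ counts $e$-cycles of $g_n$, so the function $m\mapsto d_{\mathcal U}(g^m,1)$ records the empirical cycle-type distribution, and centralizers inside $G_{\mathcal U}$ are again metric ultraproducts, of products of wreath products $\mathbb Z/e\mathbb Z\wr\Sym(c_e)$, along the derived index sequences $\bigl(c_e(g_n)\bigr)_n$. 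Using this one takes for $\sigma$ the existence of a distinguished element, or finite subgroup, subject to a metric constraint whose optimal approximation in $\Sym(n)$ is governed by a bounded arithmetic quantity attached to $n$ — for instance the best fraction of the $n$ points that can be partitioned into free orbits of a fixed finite group, which depends on the residue of $n$ modulo a fixed integer, suitably iterated or amplified so as to genuinely oscillate. Whatever the precise recipe, the remaining step is to prove that the value sequence $\bigl(\sigma^{(\Sym(n),d_n)}\bigr)_n$ does not converge, and in fact accumulates at uncountably many points.

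This last step is the main obstacle, and it is what makes the statement a theorem rather than a formality: the finite symmetric groups are ``metrically nearly indistinguishable'' — every countable sofic group embeds in every $G_{\mathcal U}$, and the strong concentration of measure on $(\Sym(n),d_n)$ tends to force convergence of the most natural numerical invariants — so one must pin down a group-theoretic quantity that is simultaneously (i) robust enough to survive to the metric ultraproduct as an isomorphism invariant, which is automatic for a first-order sentence but would otherwise require a definability argument identifying the relevant subgroups (iterated centralizers, or the ``spectrum'' of metrically embeddable sequences $(\Sym(k_n),d_{k_n})$) as canonical, and (ii) genuinely non-concentrated in $n$. Verifying (ii) by a direct combinatorial estimate in the finite symmetric groups is where essentially all the work lies; should a purely first-order $\sigma$ turn out to be too rigid, one falls back to the same scheme with a higher-complexity but still definable invariant, engineering ultrafilters $\mathcal U_r$ supported on sparse subsets of $\mathbb N$ whose gap patterns encode a real parameter $r$ in such a way that the attainable $\mathcal U$-limits of a fixed countable menu of scale functions along the derived sequences separate distinct values of $r$.
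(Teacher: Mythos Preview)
Your reduction is sound: by \L o\'s and $\aleph_1$-saturation, it suffices to exhibit a continuous first-order sentence whose values on $(\Sym(n),d_n)$ accumulate at continuum many points. But you do not actually construct such a sentence; you sketch some candidates, concede that ``this last step is the main obstacle,'' and stop. That is precisely the gap. Your concrete suggestion --- the optimal fraction of $\{1,\dots,n\}$ covered by free $H$-orbits for a fixed finite group $H$ --- fails outright: the optimum is $1-r/n$ with $0\le r<|H|$, hence converges to $1$, and the same $O(1/n)$ washout kills any finite-depth residue-type invariant. This is exactly the concentration phenomenon you yourself flag, and no amount of ``iteration or amplification'' of such local constraints will beat it, because any configuration involving boundedly many elements and their centralizers witnesses the same asymptotic value in every sufficiently large $\Sym(n)$. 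Your fallback, encoding a real parameter in the gap pattern of the support of $\mathcal U$, is not an isomorphism invariant of $G_{\mathcal U}$ without a definability argument you do not supply.

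The paper overcomes this with a genuinely new idea you do not anticipate. Rather than chase a single metric sentence, it manufactures inside $\prod^{\met}_{\mathcal U}(\Sym(n!),d_{n!})$ a copy of the \emph{algebraic} ultraproduct $\prod^{\alg}_{\mathcal U}\Alt(n)$ as the centralizer of a finite tuple, and then distinguishes those algebraic ultraproducts by classical (discrete) first-order sentences detecting congruence classes of $n$. The rigidity needed --- that the centralizer is $1$-discrete and really is the algebraic ultraproduct, with no metric fuzz --- comes from Kazhdan's property~(T): one uses a Kazhdan group $\Gamma$ surjecting onto infinitely many $\Alt(p^4-1)$ (Bartholdi--Kassabov), so that the left-right action gives expander Schreier graphs; Kun's structure theorem and the Kun--Thom cluster analysis then force almost-centralizers to form a genuine finite group, and Becker--Chapman stability upgrades the resulting uniform almost-homomorphism to an honest one, pinning down $n$ up to equivalence of ultrafilters. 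None of these ingredients appears in your outline, and without something of comparable strength the oscillation you need does not materialize.
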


The result is independent of the question whether we want to assume the continuum hypothesis or not. However, if we assume the continuum hypothesis, it follows from classical arguments that the bound $2^{\aleph_0}$ is tight. Phrasing our result in slightly different language, we prove that the FO-theory in the sense of model theory for metric structures (see \cite{benyaacov}) of the symmetric group $({\rm Sym}(n),d_n)$ endowed with the normalized Hamming metric 
$$d_n(\sigma,\tau) := \frac{|\{1 \leq i \leq n | \sigma(i) \neq \tau(i)\}|}{n}$$does not converge as $n$ tends to infinity. This answers \cite[Question 1.3]{MR2607888} and raises the natural question of the lowest complexity of the FO-sentence (in the sense of model theory for metric structures), that is able to distinguish two sequences of symmetric groups.

Our proof of \autoref{thm:mainintro} uses a blend of recent results revolving around groups with Kazhdan's property (T), see \cite{kazhdan-T}, starting with recent breakthrough results of Caprace--Kassabov \cite{caprace2023tame} and Bartholdi--Kassabov \cite{bartholdi2023property} in providing Kazhdan groups that surject onto a sufficiently large set of alternating groups. We use Kun's analysis of sofic approximation of Kazhdan groups, see \cite{MR4555893, kun2021expanders}, and the application of these results to the study of almost centralizers of sofic approximations of Kazhdan groups obtained by Kun and the second author in \cite{kun2019inapproximability}. Finally, we rely on recent work of Becker and Chapman \cite{MR4634678}, who answered a question from \cite{kun2019inapproximability} and proved  stability of uniform almost homomorphisms from finite groups to permutation groups. Finally, we use more classical work of Felgner \cite{MR1107758} and Wilson \cite{MR1477188} in giving characterizations of finite non-abelian simple groups in terms of first order sentences in the language of group theory.

Let's outline the argument: The basic mechanism of the proof is to observe that the metric groups $({\rm Sym}(n!\!),d_{n!})$ behave in different ways in the sense of model theory for metric structures for different $n \in \mathbb N.$ This is due to the existence of the left-right action of ${\rm Sym}(n) \times {\rm Sym}(n)$ on the set ${\rm Sym}(n)$, where the factors are centralizers of each other and each of them acts freely. In the presence of sufficient expansion, this situation is very rigid and can be uniquely recovered in a suitable metric ultraproduct $\prod^{\rm met}_{\mathcal U} ({\rm Sym}(n!\!),d_{n!})$. Thus, the \emph{algebraic} ultraproduct $\prod^{\rm alg}_{\mathcal U} {\rm Sym}(n)$ can be defined inside the metric ultraproduct $\prod^{\rm met}_{\mathcal U} ({\rm Sym}(n!\!),d_{n!})$ as the centralizer of a particular finite set of elements. In order to be able to make uniform statements, we need the construction of Bartholdi--Kassabov mentioned above. Now, it is folklore how to distingish algebraic ultraproducts of symmetric groups using first order sentences in the language of group theory and this finishes the proof. The actual work and novelty of the arguments lies in the effort to prove the required rigidity of centralizers and double centralizers. As explained above, this makes crucial use of particular expansion generators of ${\rm Sym}(n)$, rigidity results of Kun and the second author, and stability of uniform almost homomorphisms.

\section{Almost homomorphisms}

\subsection{Local almost homomorphisms and sofic approximations}

We will be very brief on the definition of a sofic group and refer to \cite{MR2460675} for details, examples, and references.

\begin{definition} 
Let $\Gamma$ be a group. Let $F \subset \Gamma$ be finite and $\delta>0$. We say that a map $\sigma \colon \Gamma \to {\rm Sym}(n)$ is a $(F,\delta)$-homomorphism if $d_n(\sigma(gh),\sigma(g)\sigma(h)) \leq \delta$ for all $g,h \in F$. We say that $\sigma$ is $(F,\delta)$-injective if $d_n(\sigma(g),\sigma(h)) \geq 1 - \delta$ for all $g,h \in F$ with $g \neq h.$
\end{definition}

\begin{definition}\label{def:sofic}
A group $\Gamma$ is sofic if and only if for any finite set $F \subset \Gamma$ and $\delta>0$, $\Gamma$ admits an $(F,\delta)$-injective $(F,\delta)$-homomorphism to ${\rm Sym}(n)$ for some $n$.
\end{definition}

In case of countable groups, which is the most interesting case, it is easy to see that we can characterize soficity by requiring the existence of a sequence of maps $(\sigma_n)_n$ that are $(F,\delta)$-injective $(F,\delta)$-homomorphisms for larger and larger $F$ and smaller and smaller $\delta>0$.

{It is often convenient and useful to express asymptotic properties of a sequence of $(F,\delta)$-homomorphisms for increasing $F$ and decreasing $\delta$ in terms of an asymptotic object, a metric ultra-product, that is defined using a non-principal ultrafilter. For background on ultrafilters on $\mathbb N$ see standard references, e.g. \cite{Jech03}. We will denote the set of ultrafilters by $\beta \mathbb N$, then the set of non-principal ultrafilters equals $\beta \mathbb N \setminus \mathbb N$. {Every map $f\colon \mathbb N \to \mathbb N$ induces a map $f_\ast\colon \beta\mathbb N\to \beta\mathbb N$; if $f$ is unbounded, then $f_\ast$ maps non-principal ultrafilters to non-principal ultrafilters.}
We will say that an ultrafilter $\mathcal U$ is \emph{supported} on a set $A \subseteq \mathbb N$ if $A \in \mathcal U$ or equivalently, if $\mathcal U \in \beta A \subseteq \beta \mathbb N.$
Note that every bounded sequence of real numbers has a limit \emph{along} an ultrafilter. In particular, we can study the behaviour of a bounded sequence of real numbers or the sequence of truth values of a fixed first-order formula applied to a sequence of structures along an ultrafilter.}

Let $\mathcal U$ be a non-principal ultrafilter on $\mathbb N$ and $G_n$ be groups. We consider the algebraic ultraproduct
\[
\prod{\vphantom{\prod}}^{\alg}_{\mathcal U} G_n \coloneqq  \left(\prod_n G_n \right)/ \left\{(g_n)_n \in \prod_n G_n \,\Big\vert \,
  g_n = 1_n\text{ along }\mc U\right\}.
\]

{At this point we want to recall \L{o}\'{s}'s theorem for algebraic ultraproducts, which states that a first-order formula $\varphi$ in the language of groups holds in $\prod^{\alg}_{\mathcal U} G_n$ if and only if the set of $n \in \mathbb N$ such that $\varphi$ holds in $G_n$ is an element of $\mathcal U$. }

We now define the \emph{universal sofic group} with respect to the ultrafilter $\mathcal U$ to be
\[
\Sym(\mathcal U) \coloneqq \prod\vphantom{\prod}^{\met}_{\mathcal U} ({\rm Sym}(n),d_n) =\left(\prod_n {\rm Sym}(n)\right)/{\rm N}(\mathcal U),
\]
where
\[
{\rm N}(\mathcal U) = \left\{(\sigma_n)_n \in \prod_n {\rm Sym}(n) \,\Big\vert \,
 \lim_{n \to \mathcal U} d_n(1_n,\sigma_n)=0\right\}.
\]
Note that ${\rm Sym}(\mathcal U)$ carries a natural bi-invariant metric $d_{\mathcal U}$ arising as the ultralimit of the metrics on ${\rm Sym}(n).$ This metric takes values in the interval $[0,1]$. {The metric ultraproduct plays a role in model theory of metric structures analogous to the one of algebraic ultraproducts, see \cite{benyaacov}.}
\begin{definition}
A subgroup $\Gamma \leq {\rm Sym}(\mathcal U)$ is said to be $1$-discrete if $d_{\mathcal U}(g,h)=1$ for all $g,h \in \Gamma$ with $g \neq h.$
\end{definition}

The following result goes back to \cite{MR2178069}, where universal sofic groups were considered for the first time and many basic results about them were proved, {see also \cite{MR2607888, MR2460675}.}

\begin{theorem}[Elek--Szab\'{o}] \label{thm:esz}
Let $\Gamma$ be a countable group. The following conditions are equivalent.
\begin{enumerate}[$(i)$]
    \item The group $\Gamma$ is sofic.
    \item The group $\Gamma$ is isomorphic to a $1$-discrete subgroup of ${\rm Sym}(\mathcal U)$ for all non-principal ultrafilters $\mathcal U.$
    \item The group $\Gamma$ is isomorphic to a subgroup of ${\rm Sym}(\mathcal U)$ for some non-principal ultrafilter $\mathcal U.$ 
\end{enumerate}
\end{theorem}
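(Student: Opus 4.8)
The plan is to prove the cyclic chain of implications $(i) \Rightarrow (ii) \Rightarrow (iii) \Rightarrow (i)$, with the middle step being essentially trivial. For $(ii) \Rightarrow (iii)$ there is nothing to do: a $1$-discrete subgroup is in particular a subgroup, and we are given one for \emph{all} non-principal ultrafilters, so certainly one exists for \emph{some} ultrafilter. The content is concentrated in the two remaining implications, and of these $(i) \Rightarrow (ii)$ is the one requiring actual construction.

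\smallskip

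\emph{Proof of $(i) \Rightarrow (ii)$.} Fix a countable sofic group $\Gamma$, enumerate it as $\Gamma = \{g_1, g_2, \dots\}$, and set $F_k = \{g_1, \dots, g_k\}$. By soficity (and the countable reformulation noted after Definition~\ref{def:sofic}), for each $k$ choose $n_k$ and an $(F_k, 1/k)$-injective $(F_k, 1/k)$-homomorphism $\sigma^{(k)} \colon \Gamma \to {\rm Sym}(n_k)$. I would then define, for each fixed $g \in \Gamma$, an element of $\prod_n {\rm Sym}(n)$ by placing $\sigma^{(k)}(g)$ in coordinate $n_k$ (and, say, the identity in coordinates not of the form $n_k$, or more carefully interpolating so the sequence is defined on all of $\mathbb{N}$ — one can repeat each $\sigma^{(k)}$ on the block of indices between $n_{k-1}$ and $n_k$). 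Passing to the class in ${\rm Sym}(\mathcal U)$ gives a map $\pi \colon \Gamma \to {\rm Sym}(\mathcal U)$. The two defining estimates pass to the ultralimit: for any fixed $g, h$, once $k$ is large enough that $g, h, gh \in F_k$, we have $d_{n_k}(\sigma^{(k)}(gh), \sigma^{(k)}(g)\sigma^{(k)}(h)) \leq 1/k \to 0$, so $\pi$ is a group homomorphism; and similarly $d_{n_k}(\sigma^{(k)}(g), \sigma^{(k)}(h)) \geq 1 - 1/k \to 1$ for $g \neq h$, so $\pi$ is injective and its image is $1$-discrete. This is where a small amount of care is needed: the maps $\sigma^{(k)}$ are indexed by $k$ but ${\rm Sym}(\mathcal U)$ is built from all of $\mathbb{N}$, so one must genuinely arrange a sequence defined on every $n$ whose behaviour along $\mathcal U$ is controlled — the block-repetition trick handles this cleanly, since $\mathcal U$ is non-principal and hence contains the tail of the blocks.

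\smallskip

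\emph{Proof of $(iii) \Rightarrow (i)$.} Suppose $\iota \colon \Gamma \hookrightarrow {\rm Sym}(\mathcal U)$ is injective; here I do \emph{not} assume the image is $1$-discrete. Fix a finite $F \subset \Gamma$ and $\delta > 0$. For each $g \in F$ lift $\iota(g)$ to a sequence $(\sigma_n(g))_n \in \prod_n {\rm Sym}(n)$. Since $\iota$ is a homomorphism, $\lim_{n \to \mathcal U} d_n(\sigma_n(gh), \sigma_n(g)\sigma_n(h)) = 0$ for all $g, h \in F$; since $\iota$ is injective, $\lim_{n \to \mathcal U} d_n(\sigma_n(g), \sigma_n(h)) > 0$ for $g \neq h$ in $F$ — but a priori this limit could be a small positive number rather than close to $1$. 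The standard fix is an amplification: replace each $\sigma_n(g)$ by a large tensor/diagonal power acting on $\{1, \dots, n\}^m \cong \{1, \dots, n^m\}$, under which the Hamming distance $d$ is transformed to $1 - (1-d)^m$, which tends to $1$ as $m \to \infty$ whenever $d > 0$, while distances that are $0$ stay $0$. Choosing $m$ large (depending on $F$ and $\delta$) and then a single index $n$ in the appropriate $\mathcal U$-large set, the resulting map $g \mapsto \sigma_n(g)^{\otimes m}$ is an $(F, \delta)$-injective $(F, \delta)$-homomorphism into ${\rm Sym}(n^m)$. As $F$ and $\delta$ were arbitrary, $\Gamma$ is sofic.

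\smallskip

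\emph{Expected main obstacle.} The genuinely delicate point is the amplification step in $(iii) \Rightarrow (i)$: one must verify that the diagonal-power construction is simultaneously a group homomorphism up to $\delta$ \emph{and} separates the finitely many pairs of distinct elements of $F$ to within $\delta$ of the maximal distance, uniformly — this requires a single choice of $m$ valid for all pairs in $F$ at once, which works because $F$ is finite so $\min_{g \neq h} \lim_{n \to \mathcal U} d_n(\sigma_n(g), \sigma_n(h))$ is a fixed positive number. The bookkeeping for $(i) \Rightarrow (ii)$ (arranging an honest sequence over all of $\mathbb{N}$) is routine once one commits to the block construction, and $(ii) \Rightarrow (iii)$ is immediate. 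I would present the amplification as the core lemma and keep the rest brief, referring to \cite{MR2178069} for the original treatment.
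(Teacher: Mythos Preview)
The paper does not actually prove this theorem: it is attributed to Elek--Szab\'{o} with a reference to \cite{MR2178069}, and the paragraph following the statement only records the basic correspondence between a sequence of $(F_n,\delta_n)$-approximations and a $1$-discrete embedding into ${\rm Sym}(\mathcal U)$ for an ultrafilter supported on the set of degrees $\{m_n\}$. Your proposal is thus considerably more detailed than the paper's own treatment, and the cyclic scheme together with the tensor-power amplification for $(iii)\Rightarrow(i)$ is exactly the standard argument from \cite{MR2178069}.

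One point in your $(i)\Rightarrow(ii)$ does need repair. Neither of the two options you list produces an honest element of $\prod_n{\rm Sym}(n)$ that works for \emph{every} non-principal $\mathcal U$: putting the identity in all coordinates $n\notin\{n_k\}$ yields the trivial map whenever $\{n_k:k\in\mathbb N\}\notin\mathcal U$, and ``repeating $\sigma^{(k)}$ on the block $[n_{k-1},n_k]$'' does not typecheck, since $\sigma^{(k)}(g)\in{\rm Sym}(n_k)$ cannot sit in coordinate $n\neq n_k$. The standard fix is, for each $n$, to choose the largest $k$ with $n_k\leq n$, take $\lfloor n/n_k\rfloor$ disjoint copies of $\sigma^{(k)}$, and pad with at most $n_k-1$ fixed points to land in ${\rm Sym}(n)$; this preserves the multiplicativity defect and degrades the injectivity constant only by the factor $\lfloor n/n_k\rfloor n_k/n\geq 1-n_k/n$, which tends to $1$. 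With this adjustment your argument goes through for an arbitrary non-principal $\mathcal U$, and the rest of the proposal is correct as written.
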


The basic observation that leads to the previous theorem is that a sequence of maps $\sigma_n \colon \Gamma \to {\rm Sym}(m_n)$, where $\sigma_n$ is a $(F_n,\delta_n)$-injective $(F_n,\delta_n)$-homomorphism, with $\lim_{n \to \infty} \delta_n = 0$ and $(F_n)_n$ a sequence of finite subsets that increases to $\Gamma$, leads naturally to an injective homomorphism
$$\sigma = [\sigma_n]_n \colon \Gamma \to {\rm Sym}(\mathcal U)$$ with $\mathcal U$ an ultrafilter supported on the set $\{m_n \mid n \in \mathbb N\}$. Moreover, the image is clearly $1$-discrete. Conversely, any such $1$-discrete embedding arises from a sequence as above, {and we will use the notion \emph{sofic approximation} both for $\sigma$ and the sequence $(\sigma_n)$ in this context}. We say that two {sofic approximations} $(\sigma_n)_n$ and $(\sigma'_n)_n$ are \emph{essentially  the same} {or \emph{asymptotically equivalent}} (along equivalent ultrafilters $\mathcal U,\mathcal V$, {see \autoref{def:equivalentultrafilters}} and \autoref{thm:equl} for details) if they lead to the same embedding, i.e.,  $\sigma=\sigma'.$ {Note that this also covers cases when $\mathcal U \neq \mathcal V$, but still canonically ${\rm Sym}(\mathcal U) \cong {\rm Sym}(\mathcal V).$}

As a particular consequence of \autoref{thm:esz}, we note again that existence of countable subgroups cannot distinguish the groups ${\rm Sym}(\mathcal U)$ for different ultrafilters up to isomorphism.

If $\Gamma$ is finitely generated, there is a slightly different picture of sofic approximations {using the language of Schreier graphs. Let $S$ be a finite set. An $S$-labelled Schreier graph is an oriented $S$-labelled graph, such that every vertex has exactly one ingoing and one outgoing edge labelled with $s \in S$. There is a natural correspondence between $S$-labelled Schreier graphs and actions of the free group on $S$. Let $S \subset \Gamma$ be a finite generating set.} Any $(F,\delta)$-injective $(F,\delta)$-homomorphism $\sigma \colon \Gamma \to {\rm Sym}(n)$ can be restricted to $S$ and leads to a Schreier graph ${\rm Sch}(n,\sigma(S))$ of the free group on $S$; {indeed, we put an edge $(i,s,j)$ for $i,j \in \{1,\dots,n\}, s \in S$ if $\sigma(s)(i)=j$.} It is sometimes more convenient to work with sequences of Schreier graphs that look locally more and more like the Cayley graph ${\rm Cay}(\Gamma,S)$ instead of families  of maps as in  \autoref{def:sofic}. We will frequently switch between the different pictures.

\subsection{Uniform almost homomorphisms}

A different notion of almost homomorphism arises when we require the almost multiplicativity to occur in a uniform way on the entire domain of the map.

\begin{definition}
Let $G$ be a group and $\delta>0$. We say that a map $\sigma \colon G \to {\rm Sym}(n)$ is a uniform $\delta$-homomorphism if
$$d_n(\sigma(gh),\sigma(g)\sigma(h))\leq \delta, \quad \forall g,h \in G.$$

\end{definition}

Following standard terminology, we say that two maps $\sigma,\sigma' \colon G \to {\rm Sym}(n)$ are uniformly $\varepsilon$-close, if $d_n(\sigma(g),\sigma'(g))\leq \varepsilon$ for all $g \in G.$ Uniform perturbations of homomorphisms are the \emph{trivial} examples of uniform almost homomorphisms. Uniform almost homomorphisms were first studied by Kazhdan \cite{kazhdan} in a more functional analytic setting. The theme was further developed in \cite{burgerozawathom, hatamigowers, dechiffreozawathom}. The setting of uniform almost homomorphisms with permutation group targets comes up naturally in the study of almost centralizers of sofic approximations of Kazhdan groups, see \cite{kun2019inapproximability}. The most striking rigidity/stability result  for uniform almost homomorphisms in the permutation group setting is the following theorem, which answered a question from \cite{kun2019inapproximability}:

\begin{theorem}[Becker--Chapman, \cite{MR4634678}*{Theorem 1.2}]\label{thm:becker-chapman}
For every $\varepsilon>0$, there exists $\delta>0$, such that for every $n \in \mathbb N$, every finite group $G$ and every uniform $\delta$-homomorphism $\sigma \colon G \to {\rm Sym}(n)$, there exists a homomorphism $\pi \colon G \to {\rm Sym}(m)$ with $m \in [n,(1+\varepsilon)n]$ which is uniformly $\varepsilon$-close to $\sigma \oplus 1_{m-n}.$
\end{theorem}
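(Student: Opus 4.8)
The plan is to reduce the uniform-almost-homomorphism statement to a statement about finite-dimensional unitary (permutation-) representations and exploit the rigidity provided by averaging over the finite group $G$. First I would encode a uniform $\delta$-homomorphism $\sigma \colon G \to {\rm Sym}(n)$ by its associated action on $\{1,\dots,n\}$, or equivalently by the $n$-dimensional permutation representation $\rho_\sigma$ on $\ell^2(n)$. The condition $d_n(\sigma(gh),\sigma(g)\sigma(h)) \le \delta$ translates into $\|\rho_\sigma(gh) - \rho_\sigma(g)\rho_\sigma(h)\|_{\rm HS}^2 \le 2\delta n$ (up to constants), so $\rho_\sigma$ is a uniform almost-representation in Hilbert--Schmidt norm. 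The classical trick for almost-representations of \emph{finite} groups is to set $P := \frac{1}{|G|^2}\sum_{g,h \in G} \rho_\sigma(g)\rho_\sigma(h)^{*} \rho_\sigma(gh)$-type averages — more precisely, to average the ``cocycle'' $c(g,h) := \rho_\sigma(gh)^{*}\rho_\sigma(g)\rho_\sigma(h)$, which is close to the identity, and use the fact that over a finite group one can untwist a near-cocycle. This should produce an honest unitary representation $\pi_0$ of $G$ on a Hilbert space close to $\ell^2(n)$ that is $O(\varepsilon)$-close to $\rho_\sigma$ in the normalized HS-norm.

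Next I would deal with the fact that $\pi_0$ need not be a permutation representation and need not live on a space of the right dimension. Here the key structural input is that unitary representations of finite groups decompose into irreducibles with \emph{integer} multiplicities, and the regular representation of $G$ contains every irreducible; so by adding a controlled number of copies of the regular representation (which is the permutation representation of $G$ acting on itself by left translation) one can adjust multiplicities to make $\pi_0$ (a summand of) a genuine permutation representation of $G$ on some set of size $m$. One must check that $m$ can be taken in $[n,(1+\varepsilon)n]$: this is where the uniform bound on $\|\rho_\sigma(g)\rho_\sigma(h) - \rho_\sigma(gh)\|$ must be leveraged to show that the ``defect subspace'' on which $\pi_0$ fails to match $\rho_\sigma$ has dimension $\le \varepsilon n$, so that only $O(\varepsilon n)$ correction vectors (hence $O(\varepsilon n / |G|)$ extra copies of the regular representation) are needed. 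The map $\sigma \oplus 1_{m-n}$ appears precisely because we pad the original $n$-point action with $m-n$ fixed points before comparing with the permutation representation $\pi$ underlying $\pi_0$ plus corrections.

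I would then verify that the $\pi$ so obtained is uniformly $\varepsilon$-close to $\sigma \oplus 1_{m-n}$: two permutations $\alpha,\beta \in {\rm Sym}(m)$ satisfy $d_m(\alpha,\beta) \le \|\rho(\alpha)-\rho(\beta)\|_{\rm HS}^2/m$ (again up to a universal constant, since a transposition-free disagreement in $k$ points contributes $\asymp k$ to the squared HS-distance), so HS-closeness of the representations gives Hamming-closeness of the permutations, uniformly in $g$ because the averaging/untwisting was performed once and for all. Finally I would trace through the dependence of constants to confirm that $\delta$ can be chosen depending only on $\varepsilon$ and \emph{not} on $n$, $G$, or $\sigma$ — this uniformity is automatic from the averaging argument since all estimates are in the normalized HS-norm and the combinatorial conversions between Hamming distance and HS-norm are dimension-free.

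The main obstacle I anticipate is the control of the dimension: proving $m \le (1+\varepsilon)n$ rather than merely $m \le C(\varepsilon) n$ or $m = O(n)$. A naive untwisting of the near-cocycle could enlarge the space by a factor depending on $|G|$, which would be fatal. Overcoming this requires a careful, quantitatively efficient version of the Shtern/Kazhdan-type stability argument — essentially showing that the orthogonal projection from $\ell^2(n)$ onto the ``good'' part of $\pi_0$ has rank $\ge (1-\varepsilon)n$, so that only a genuinely $\varepsilon$-small defect needs repair. I expect the proof of Becker--Chapman handles this via a clever choice of which invariant subspaces to average over, combined with a pigeonhole/counting argument on irreducible multiplicities, and this is the step where the bulk of the technical work lies.
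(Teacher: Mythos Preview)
The paper does not contain a proof of this statement: \autoref{thm:becker-chapman} is quoted from \cite{MR4634678} as an external input, with only the remark that $\delta=\varepsilon/2039$ suffices. There is therefore no ``paper's own proof'' to compare your proposal against.

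That said, your outline has a genuine gap that the actual Becker--Chapman argument does not share. The Gowers--Hatami/Kazhdan averaging mechanism you describe produces a true \emph{unitary} representation $\pi_0$ on a space of dimension close to $n$ that is HS-close to $\rho_\sigma$. Your plan to upgrade $\pi_0$ to a \emph{permutation} representation by matching irreducible multiplicities with those of some permutation module only determines $\pi_0$ up to unitary equivalence; it gives no control over the intertwining unitary. Concretely, even if $\pi_0 \cong U \pi' U^{*}$ for a genuine permutation representation $\pi'$, there is no reason $U$ should be close to a permutation matrix, so $\pi'(g)$ need not be Hamming-close to $\sigma(g)$. The inequality you invoke, $d_m(\alpha,\beta) \lesssim \|\rho(\alpha)-\rho(\beta)\|_{\rm HS}^2/m$, compares two permutations; it does not let you pass from an arbitrary unitary near $\rho_\sigma(g)$ to a nearby permutation. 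This is exactly the step where ``unitary stability'' and ``permutation stability'' diverge, and your sketch does not bridge it.

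Becker and Chapman avoid this obstacle entirely: their proof is combinatorial, working directly with the almost-action on $\{1,\dots,n\}$ and repairing it orbit-by-orbit via a graph-theoretic argument, never passing through $U(n)$. If you want to salvage the representation-theoretic route you would need an additional ingredient showing that a unitary HS-close to a permutation matrix is itself close to a permutation matrix, uniformly in $n$; such a statement is false in general without further structural hypotheses.
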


In fact the proof shows that $\delta=\varepsilon/2039$ is enough.

\section{Applications of Kazhdan's property (T)}

\subsection{Kazhdan groups and their finite quotients}

Already in the work of Thomas \cite{MR2607888}, the existence of expander generators of symmetric groups played a crucial role. There, the original work of Kassabov \cite{kassabov2007} was used in order to obtain a certain rigidity of the centralizer of a certain finite set of elements in a universal sofic group. In our approach, we need to use more refined result in the same direction. Indeed, in the meantime, a question of Lubotzky was answered positively, and the existence of a Kazhdan group with infinitely many alternating quotients was proven. We will need the following consequence of \cite{bartholdi2023property}*{Corollary 14}.

\begin{theorem}[Bartholdi--Kassabov] \label{thm:bartkass}
There exists a Kazhdan group $\Gamma$, which surjects onto ${\rm Alt}(p^4-1)$ for all primes $p\geq 13.$
\end{theorem}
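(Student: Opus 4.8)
The plan is to derive this directly from \cite{bartholdi2023property}*{Corollary 14} together with the standard permanence properties of Kazhdan's property (T). First I would unwind the statement of that corollary into the precise form we need: it furnishes a finitely generated group $\Lambda$ with property (T), equipped with a family of surjective homomorphisms onto finite alternating groups whose degrees range over a prescribed infinite index set — in the relevant instance the degrees $p^4-1$, arising from the natural action on $\mathbb{F}_p^4\setminus\{0\}$ (equivalently on $\mathbb{F}_{p^4}^{\times}$) — valid for all primes $p$ outside a finite exceptional set. Reading off this special case is the only genuinely content-bearing step, and it is essentially bookkeeping in the cited source.

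Next I would set $\Gamma$ to be $\Lambda$ itself. If, instead, Corollary 14 is phrased so that distinct groups $\Lambda_i$ handle distinct ranges of primes, one simply replaces $\Gamma$ by a finite direct product $\Lambda_1\times\cdots\times\Lambda_k$ — still finitely generated, and still Kazhdan since property (T) is inherited by finite direct products — and composes each required surjection with the appropriate coordinate projection. Property (T) for $\Gamma$ is then immediate (it is the group supplied by the corollary, or a finite power thereof), and surjectivity onto $\Alt(p^4-1)$ is preserved under post-composition with a projection. Thus the existence of $\Gamma$ reduces entirely to checking that every prime $p\ge 13$ lies in the admissible range of Corollary 14.

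That check is elementary number theory on $p^4-1=(p-1)(p+1)(p^2+1)$: one must confirm that $p^4-1$ is large enough, has the correct parity, and avoids the finitely many small degrees excluded by the construction, and — if the corollary imposes a congruence on the degree — that $p^4-1$ satisfies it. The latter is automatic in our range, since by Fermat–Euler one has $p^4\equiv 1\pmod{240}$ for every prime $p\ge 7$; and the bound ``$p\ge 13$'' is precisely the threshold past which the size and exceptional-set conditions also hold. Assembling these observations gives the theorem.

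The main (and essentially only) obstacle is matching the family of admissible degrees that \cite{bartholdi2023property}*{Corollary 14} natively produces to the arithmetic family $\{p^4-1 : p\text{ prime},\ p\ge 13\}$, and verifying that $13$ is the right threshold; there is no additional difficulty, as property (T) and the surjectivity statements transfer along the trivial operations (identity, finite direct product, coordinate projection) used above.
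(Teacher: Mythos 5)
Your proposal is correct and takes essentially the same route as the paper: the paper gives no proof of this statement at all, simply recording it as a direct consequence of \cite{bartholdi2023property}*{Corollary 14}, which is exactly the reduction you describe (your additional remarks about direct products and the congruence $p^4\equiv 1 \pmod{240}$ are harmless but not needed).
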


It would be more convenient for the proof of our main theorem to have a Kazhdan group $\Gamma$, which surjects onto all alternating groups in a way such that any non-trivial $g \in \Gamma$ becomes trivialized only in finitely many of these quotients. However, it seems to be out of reach to construct such a group.

\subsection{Sofic approximations of Kazhdan groups}

The work of G\'{a}bor Kun \cite{kun2021expanders}, anwering a question of Bowen, was the first to prove a positive structure result for sofic approximation beyond the amenable case.

\begin{theorem}[Kun] \label{thm:kun}
Every sofic approximation of a Kazhdan group is essentially a disjoint union of expanders.
\end{theorem}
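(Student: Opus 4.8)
The plan is to prove \autoref{thm:kun} by combining Kun's local structure analysis of sofic approximations with the Kazhdan spectral gap, which gives uniform expansion on all ``large'' Schreier components. First I would fix a finite generating set $S$ of the Kazhdan group $\Gamma$ and recall that property (T) means there is a Kazhdan constant $\kappa>0$: for every unitary representation $\pi$ of $\Gamma$ without invariant vectors and every unit vector $\xi$, some $s\in S$ moves $\xi$ by at least $\kappa$. The key translation is that a sofic approximation $\sigma_n\colon\Gamma\to{\rm Sym}(m_n)$ — viewed through its Schreier graphs ${\rm Sch}(m_n,\sigma_n(S))$ — yields a sequence of quasi-actions on $\ell^2(\{1,\dots,m_n\})$ that are asymptotically representations of $\Gamma$; the point is to show that the $\mathcal U$-almost-invariant vectors are precisely supported on the ``small'' part, so that after discarding a set of vertices of vanishing density the remaining graphs form a (two-sided) expander family.

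The key steps, in order: (1) Pass to the ultraproduct. Using \autoref{thm:esz} the sofic approximation defines a $1$-discrete copy of $\Gamma$ inside ${\rm Sym}(\mathcal U)$, and the associated sequence of permutation representations on $\mathbb C^{m_n}$ has an ultraproduct which is a genuine unitary representation $\pi_{\mathcal U}$ of $\Gamma$ on a (nonseparable) Hilbert space. (2) Identify invariant vectors with ``small components''. An asymptotically invariant sequence of vectors $(\xi_n)$ with $\|\xi_n\|=1$ corresponds, after normalization, to a sequence of subsets $A_n\subseteq\{1,\dots,m_n\}$ (e.g.\ superlevel sets of $|\xi_n|$) whose $\sigma_n(S)$-boundary has vanishing relative size; property (T), via the Kazhdan constant applied to $\pi_{\mathcal U}$ modulo its invariant vectors, forces such $A_n$ to have density either $\to 0$ or $\to 1$ along $\mathcal U$ — there is a spectral gap for the Laplacian on the orthogonal complement of the locally constant functions. (3) Decompose the graphs. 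The ``locally constant'' functions correspond to a partition of almost all of $\{1,\dots,m_n\}$ into pieces; the genuinely large pieces are, by step (2), expanders with an explicit spectral gap bounded below in terms of $\kappa$ and $|S|$, while the union of all small pieces has density $\to 0$ along $\mathcal U$ and can be absorbed into the error. This is exactly the statement that the approximation is essentially a disjoint union of expanders. (4) Finally, I would remove the dependence on $\mathcal U$ by a standard diagonal/compactness argument: if for every $\varepsilon$ the $\varepsilon$-expander conclusion failed along a subsequence, one could build an ultrafilter witnessing the failure, contradicting step (3).

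The main obstacle will be step (2): turning the abstract spectral gap of $\pi_{\mathcal U}$ into a \emph{quantitative, uniform} Cheeger-type inequality for the finite Schreier graphs, including the dichotomy that level sets have density bounded away from the middle of $[0,1]$. The subtlety is that a sofic approximation is only an \emph{approximate} action — $\sigma_n$ is an $(F,\delta)$-homomorphism, not a homomorphism — so the operators $\sigma_n(s)$ are only approximately the generators of a representation, and one must control how the defect $\delta$ propagates when passing to the ultraproduct and back. One has to show that the $\mathcal U$-limit genuinely kills the multiplicative defect (it does, since $d_n(\sigma_n(gh),\sigma_n(g)\sigma_n(h))\to 0$ along $\mathcal U$ for each fixed $g,h$) while simultaneously the Kazhdan inequality survives the passage to finite level with only an $o(1)$ loss. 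Handling the interaction between the many small components — ensuring that ``disjoint union of expanders'' is the right normal form and that no pathological nested structure of medium-density sets survives — is the delicate combinatorial heart, and is precisely where Kun's argument in \cite{kun2021expanders}, \cite{MR4555893} does the real work; I would follow that blueprint rather than reinvent it.
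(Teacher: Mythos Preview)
The paper does not give a proof of \autoref{thm:kun}; it is stated as a result of Kun with a citation to \cite{kun2021expanders} (and \cite{MR4555893}) and then used as a black box in the rest of the argument. So there is no proof in the paper to compare your proposal against. Your outline is a reasonable high-level account of why property~(T) should force such a decomposition---the Kazhdan spectral gap on the ultraproduct representation translates into a uniform Cheeger-type dichotomy for almost-invariant sets in the finite Schreier graphs, and iterating this yields an expander decomposition up to a set of vanishing density---and you yourself note at the end that the delicate combinatorics is precisely what is carried out in \cite{kun2021expanders} and that you would follow that blueprint. In that sense your proposal and the paper agree perfectly: both defer to Kun for the actual proof.
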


{More precisely, we say that a finite graph $X$ is a $c$-expander if its Cheeger constant, defined by
\[
h(X) \coloneqq \min_{A \subseteq V(X), |A| \leq |V(X)|/2} \frac{|\partial A|}{|A|}, 
\]
where $\partial A = E(A,V\setminus A)$ is the edge boundary of $A$, satisfies $h(X)\geq c$. A sequence of finite graphs $(X_n)_n$ is an expander sequence if $|X_n|\to \infty$ and there is a $c>0$ such that $X_n$ is a $c$-expander for all $n$. } 

In subsequent work of Kun and the second author \cite{kun2019inapproximability} it was realized that sofic approximations of Kazhdan groups $\Gamma$ admit a special rigidity when it comes to the study of almost automorphisms of such. Let $S$ be a finite generating set for $\Gamma$. For a given $(F,\delta)$-injective $(F,\delta)$-homomorphism $\sigma \colon \Gamma \to {\rm Sym}(n)$ (where without loss of generality $F\supseteq S$) it makes sense to study the set of $\varepsilon$-automorphisms. These are bijections $\rho \in {\rm Sym}(n)$ of the vertex set of the associated Schreier graph ${\rm Sch}(n,\sigma(S))$, such that
$$|\{(\rho(i),s,\rho(j)) \in E \mid (i,s,j) \in E \}| \geq (1-\varepsilon) |E|,$$
i.e., the map $\rho$ is almost {a $S$-labelled graph automorphism}. If ${\rm Sch}(n,\sigma(S))$ is a $c$-expander, then it follows that the set of $\varepsilon$-automorphisms of ${\rm Sch}(n,\sigma(S))$ consists of clusters, i.e., any two $\varepsilon$-automorphisms are either $(2\varepsilon/c)$-close or $(1-2\varepsilon/c)$-apart (see \cite{kun2019inapproximability} for the details). It thus makes sense to talk about the set of clusters. Naturally, one would expect that the set of clusters forms a group. However, the evident problem is that the product of two $\varepsilon$-homomorphisms is a priori only a $2\varepsilon$-homomorphism, so that we run out of our set of clusters unless some form of self-improvement mechanism can be implemented. One of the main technical results in \cite{kun2019inapproximability} is that this is indeed possible in the situation just discussed.

\begin{theorem}[{\cite[see Theorem 3.4, Lemmas 4.1 and 4.2]{kun2019inapproximability}}] \label{thm:kun-thom}
Let $\Gamma$ be a Kazhdan group with a finite generating set $S \subset \Gamma$. {
   Then there exist $c>0$ and $\varepsilon_0>0$ such that for every $\eta > 0$ there exists a finite set $F \subset \Gamma$ and $\delta>0$ with the following properties: for every $(F,\delta)$-injective $(F,\delta)$-homomorphism $\sigma \colon \Gamma \to {\rm Sym}(n)$, if ${\rm Sch}(n,\sigma(S))$ is a $c$-expander, then for every $\eps\in (\eta,\eps_0)$ the set of clusters of $\varepsilon$-automorphisms of ${\rm Sch}(n,\sigma(S))$ forms a finite group $G$ which is independent of the choice of $\eps\in (\eta,\eps_0)$. Moreover, all representatives of  different elements of $G$ have normalized Hamming distance at least $1-2\eps/c$ and the size of clusters is at most $2 \varepsilon/c$.}
\end{theorem}
\begin{proof}
{
By \cite[Theorem 3.4]{kun2019inapproximability}, we get $c>0$ and $\varepsilon_0>0$ such that for every $\eta > 0$ and every sofic approximation of $\Gamma$ by $c$-expanders ${\rm Sch}(n,\sigma(S))$, every $\varepsilon$-automorphism of ${\rm Sch}(n,\sigma(S))$ is close to an $\eta/2$-automorphism as long as the map $\sigma_n\colon \Gamma\to \Sym(X_n)$ is an $(F,\delta)$-injective $(F,\delta)$-homomorphism and $\eps\in (0,\eps_0)$. 

Now we can apply \cite[Lemma 4.2]{kun2019inapproximability} with $\delta \coloneqq \eps\in (\eta,\eps_0)$ and deduce that the set of clusters of $\varepsilon$-automorphisms of ${\rm Sch}(n,\sigma(S))$ becomes a finite group $G$ with respect to multiplication of representatives of clusters. Moreover, by \cite[Lemma 4.1]{kun2019inapproximability}, two $\eps$-automorphisms are either $(2\eps/c)$-close or $(1-2\eps/c)$-apart. Since every $\varepsilon$-automorphism is close to an $\eta/2$-automorphism (which is necessarily unique by the claim in the previous sentence), the group $G$ is independent of the choice of $\eps\in (\eta,\eps_0)$. This finishes the proof.
}
\end{proof}

We call the group $G$ obtained from the previous theorem the cluster group of $\varepsilon$-automorhisms or the $\varepsilon$-centralizer of $\sigma \colon \Gamma \to {\rm Sym}(n)$. If $G$ is the cluster group of $\varepsilon$-automorphisms as above, we may choose a representative of each cluster and obtain a uniform $C\varepsilon$-homomorphism  $\sigma' \colon G \to {\rm Sym}(n)$ for some $C$ depending only on the generating set and Kazhdan constant of $\Gamma$. This observation will be used in combination with \autoref{thm:becker-chapman} in the proof of the main theorem.

\medskip

We also obtain an immediate corollary. {Recall, that the centralizer of a subgroup $H$ of a group $G$ is defined as $\Cnt_G(H) = \{g \in G \mid gh=hg \text{ for all } h \in H\}.$ We will also use the notation $\Cnt(H)$ if the ambient group is clear from the context and denote by $\Cnt_G(F)$ or $\Cnt(F)$ the centralizer of a subset $F$ of a group.}

\begin{corollary}\label{cor:centralizer-is-an-algebraic-ultraproduct}
The centralizer of a sofic approximation of a Kazhdan group by expanders is an algebraic ultraproduct of finite groups. More precisely, let $\Gamma$ be a Kazhdan group, $S \subset \Gamma$ be a finite generating set, and $\sigma = [\sigma_n]_n \colon \Gamma\to \prod_{\mathcal V}^{\rm met} ({\rm Sym}(n),d_n)$ is a sofic approximation by expanders, and $\eps > 0$ sufficiently small. Then the centralizer of $\sigma(\Gamma)$ is $1$-discrete and isomorphic to the algebraic ultraproduct $\prod_{\mathcal V} G_n$ of finite groups $G_n$, where $G_n$ is the $\varepsilon$-centralizer of $\sigma_n(S)$.
\end{corollary}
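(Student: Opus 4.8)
\emph{Plan.} The plan is to build an explicit isomorphism $\Phi$ from the centralizer $C$ of $\sigma(\Gamma)$ in $\prod_{\mathcal V}^{\rm met}({\rm Sym}(n),d_n)$ to the algebraic ultraproduct $\prod_{\mathcal V}G_n$, by sending a centralizing sequence to the sequence of clusters of its components. Since $\sigma$ is a group homomorphism and $S$ generates $\Gamma$, the set $\sigma(S)$ generates $\sigma(\Gamma)$, so $[\rho_n]_n\in C$ if and only if $\lim_{n\to\mathcal V}d_n(\rho_n\sigma_n(s),\sigma_n(s)\rho_n)=0$ for every $s\in S$. As $[\sigma_n]_n$ is a sofic approximation by expanders there is $c>0$ with ${\rm Sch}(n,\sigma_n(S))$ a $c$-expander for $\mathcal V$-a.e.\ $n$; fix the finite $F\supseteq S$ and $\delta>0$ provided by \autoref{thm:kun-thom} for this $c$ and the given (small) $\varepsilon$. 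Then for $\mathcal V$-a.e.\ $n$ the map $\sigma_n$ is also an $(F,\delta)$-injective $(F,\delta)$-homomorphism, so $G_n$ is genuinely the cluster group of $\varepsilon$-automorphisms of ${\rm Sch}(n,\sigma_n(S))$; on the remaining $\mathcal V$-null set put $G_n=\{1\}$. Finally, the number of edges $(i,s,j)$ of ${\rm Sch}(n,\sigma_n(S))$ for which $(\rho(i),s,\rho(j))$ is not an edge equals $\sum_{s\in S}n\,d_n(\rho\sigma_n(s),\sigma_n(s)\rho)$, so ``$\rho$ is an $\eta$-automorphism'' and ``$d_n(\rho\sigma_n(s),\sigma_n(s)\rho)\le\eta$ for all $s\in S$'' are equivalent up to a factor $|S|$; in particular, if $[\rho_n]_n\in C$ then for every $\eta>0$ the component $\rho_n$ is an $\eta$-automorphism for $\mathcal V$-a.e.\ $n$.

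\emph{The map $\Phi$, homomorphism, injectivity.} For $[\rho_n]_n\in C$ the component $\rho_n$ is, for $\mathcal V$-a.e.\ $n$, an $\varepsilon$-automorphism, hence lies in a unique cluster $[\rho_n]\in G_n$; set $\Phi([\rho_n]_n):=([\rho_n])_n$. This is independent of the representing sequence: if $d_n(\rho_n,\rho'_n)\to 0$ along $\mathcal V$ with $\rho_n,\rho'_n$ eventually $\varepsilon$-automorphisms, then for $\mathcal V$-a.e.\ $n$ we have $d_n(\rho_n,\rho'_n)<\tfrac12<1-O_c(1/n)$, and the clustering dichotomy from \cite{kun2019inapproximability} forces $\rho_n$ and $\rho'_n$ into the same cluster. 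That $\Phi$ is a homomorphism is exactly the content of the group law on $G_n$ furnished by \autoref{thm:kun-thom}: for $\varepsilon$ small enough the product of two $\varepsilon$-automorphisms $\rho_n,\rho'_n$ is a $2|S|\varepsilon$-automorphism, which the self-improvement brings back into range, and the product of their clusters is the cluster of $\rho_n\rho'_n$. Injectivity follows because the identity of $G_n$ is the cluster of $1_n$: if $\Phi([\rho_n]_n)=e$ then $\rho_n$ is $O_c(1/n)$-close to $1_n$ for $\mathcal V$-a.e.\ $n$, so $d_n(\rho_n,1_n)\to 0$ along $\mathcal V$ and $[\rho_n]_n=1$.

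\emph{Surjectivity, $1$-discreteness, and the main obstacle.} Given $[g_n]_n\in\prod_{\mathcal V}G_n$, an arbitrary representative $\rho_n^{0}\in g_n$ is only an $\varepsilon$-automorphism, hence commutes with each $\sigma_n(s)$ merely up to $|S|\varepsilon$, so $[\rho_n^{0}]_n$ need not lie in $C$; this is the one genuine difficulty. It is overcome by the self-improvement mechanism underlying \autoref{thm:kun-thom}, which inside each cluster produces a representative $\rho_n\in g_n$ with $d_n(\rho_n\sigma_n(s),\sigma_n(s)\rho_n)\le\kappa/n$ for all $s\in S$, for a constant $\kappa=\kappa(c,S)$ (equivalently, every $\varepsilon$-automorphism is $O_c(1/n)$-close to a $(\kappa/n)$-automorphism, which by the dichotomy lies in its cluster) --- this self-improvement is precisely what closes the clusters up into a group in \cite{kun2019inapproximability}. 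For such $\rho_n$ we have $\lim_{n\to\mathcal V}d_n(\rho_n\sigma_n(s),\sigma_n(s)\rho_n)=0$, so $[\rho_n]_n\in C$ and $\Phi([\rho_n]_n)=[g_n]_n$; hence $\Phi$ is surjective, and thus an isomorphism. The same dichotomy yields $1$-discreteness of $C$: if $[\rho_n]_n\neq[\rho'_n]_n$ in $C$ then $d_n(\rho_n,\rho'_n)\ge\beta$ on a set in $\mathcal V$ for some $\beta>0$, and since $\rho_n,\rho'_n$ are $\varepsilon$-automorphisms for $\mathcal V$-a.e.\ $n$, the ``$O_c(1/n)$-close'' alternative is impossible for large $n$, forcing $d_n(\rho_n,\rho'_n)\ge 1-O_c(1/n)\to 1$, i.e.\ $d_{\mathcal V}([\rho_n]_n,[\rho'_n]_n)=1$. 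The remaining steps --- the reduction to $S$, the dictionary between almost-commutation and almost-automorphism of ${\rm Sch}(n,\sigma_n(S))$, and the clustering dichotomy --- are routine bookkeeping.
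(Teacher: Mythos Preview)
Your argument is correct and is precisely the natural elaboration the paper has in mind: the paper states the corollary as ``immediate'' from \autoref{thm:kun-thom} with no further proof, and your map $\Phi$ (centralizing sequence $\mapsto$ sequence of clusters), together with the clustering dichotomy for well-definedness, injectivity, and $1$-discreteness, is exactly how one unpacks that word. Your identification of surjectivity as the only genuine obstacle, and its resolution via the self-improvement mechanism of \cite{kun2019inapproximability}, is on the mark.

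One cautionary remark on the surjectivity step: the paper itself only records that cluster representatives yield a uniform $C\varepsilon$-homomorphism, not the sharper claim that each cluster contains a $(\kappa/n)$-automorphism. If you want to avoid asserting that stronger bound outright, an equivalent route is to observe that \autoref{thm:kun-thom} applies for every sufficiently small $\varepsilon'\le\varepsilon$ (with its own $F',\delta'$), that the resulting cluster groups are canonically identified (each $\varepsilon'$-cluster sits in a unique $\varepsilon$-cluster, and the self-improvement gives the reverse containment), and then to diagonalize along $\mathcal V$: choose a sequence $\varepsilon_k\downarrow 0$ and sets $A_k\in\mathcal V$ on which $\sigma_n$ is $(F_k,\delta_k)$-good, and for $n\in A_k\setminus A_{k+1}$ take $\rho_n$ to be an $\varepsilon_k$-representative of $g_n$. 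This produces $[\rho_n]_n\in C$ with $\Phi([\rho_n]_n)=(g_n)_n$ without invoking the $O(1/n)$ bound per se. Either way, the substance of your argument stands.
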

{
\begin{proof}
By definition of the metric ultraproduct, the centralizer of $\sigma(\Gamma)$ consists of elements $g = [g_n]_n$ such that $g_n$ is an $\eps_n$-automorphism of ${\rm Sch}(n,\sigma_n(S))$ for all sufficiently large $n$, and $\eps_n\to 0$. By \autoref{thm:kun-thom}, for sufficiently large $n$, $g_n\in G_n$, where $G_n$ is the $\varepsilon$-centralizer of $\sigma_n(S)$. Moreover, if $g,h \in \Cnt(\sigma(\Gamma))$ are different, then $d(g_n,h_n)$ is bounded away from zero along $\mc V$ (say, by $\eps'\in (0,\eps_0)$), and therefore $g_n$ and $h_n$ represent different elements of $G_n$ for sufficiently large $n$. Therefore \autoref{thm:kun-thom} implies $d_n(g_n,h_n) \geq 1 - 2\eps'/c$. This shows that the centralizer is $1$-discrete and isomorphic to the algebraic ultraproduct $\prod_{\mathcal V} G_n$.
\end{proof}
}

Note that the assumption that the sofic approximation is by expanders is crucial for the $1$-discreteness of the centralizer. As it turns out, there is a partial converse to this result: $1$-discreteness of the centralizer and the double centralizer guarantees that the approximation is by expanders.

\begin{lemma}\label{lem:discrete-centralizers-implies-expander}
Let $\sigma \colon \Gamma\to \prod_{\mathcal V}^{\rm met} ({\rm Sym}(n),d_n)$ be a sofic approximation of an infinite Kazhdan group. If the centralizer $\Cnt(\sigma(\Gamma))$ and the double centralizer $\Cnt(\Cnt(\sigma(\Gamma)))$ are $1$-discrete, then the sofic approximation is essentially by expanders, i.e., the Schreier graphs obtained from the sofic approximation are asymptotically equivalent to a $c$-expander sequence, where $c > 0$ only depends on the Kazhdan constant of $\Gamma$. 
\end{lemma}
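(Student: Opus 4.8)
The plan is to prove the contrapositive in spirit: if the sofic approximation is \emph{not} essentially by expanders, then we produce non-trivial elements witnessing that either $C(\sigma(\Gamma))$ or $C(C(\sigma(\Gamma)))$ fails to be $1$-discrete. By \autoref{thm:kun}, every sofic approximation of a Kazhdan group is essentially a disjoint union of expanders; the only obstruction to being a single $c$-expander sequence is therefore that the Schreier graphs $\mathrm{Sch}(n,\sigma_n(S))$ split (along $\mathcal V$) into several connected components of comparable size, say $k_n \geq 2$ components each of density bounded below. Concretely, after passing to a subsequence in $\mathcal V$, write the vertex set of $\mathrm{Sch}(n,\sigma_n(S))$ as a disjoint union $V_n = V_n^{(1)} \sqcup \dots \sqcup V_n^{(k_n)}$ of $\Gamma$-invariant (i.e. $\sigma_n(S)$-invariant) subsets, each of relative size $\geq \eta$ for some fixed $\eta > 0$ and some $k_n \geq 2$; the components themselves are $c$-expanders by Kun's theorem.

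First I would extract from this component decomposition a non-trivial element of the centralizer that is \emph{not} $1$-discrete. Since there are at least two components of comparable size, one can find, along $\mathcal V$, a partial bijection $\rho_n$ between (most of) two such components $V_n^{(i)}, V_n^{(j)}$ that intertwines the $\sigma_n(S)$-actions up to error $o(1)$ — this uses that both components look locally like large balls in $\mathrm{Cay}(\Gamma,S)$, so a greedy/BFS matching of a ball in one to a ball in the other extends to cover a definite fraction; iterating over a Følner-type exhaustion is impossible (the group is non-amenable) but one does not need that: one only needs a single $\varepsilon$-automorphism $\rho_n \in \mathrm{Sym}(n)$ that moves a definite fraction $\geq \eta$ of vertices, obtained by swapping $V_n^{(i)}$ and $V_n^{(j)}$ via such a matching and fixing the rest. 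By \autoref{thm:kun-thom}, for $\varepsilon$ small the clusters of $\varepsilon$-automorphisms form the group $G_n$, and $[\rho_n]_n$ is a non-identity element $g$ of $C(\sigma(\Gamma)) = \prod_{\mathcal V} G_n$ (\autoref{cor:centralizer-is-an-algebraic-ultraproduct}) with $d_{\mathcal V}(g,1) \leq 1 - \eta$. If this element is already non-trivially non-$1$-discrete in $C(\sigma(\Gamma))$ — i.e. $0 < d_{\mathcal V}(g,1) < 1$ — we are done by taking it together with the identity. The only remaining case is when every such component-permuting element is at distance exactly $1$ from $1$ (equivalently, no $\varepsilon$-automorphism has a definite-size fixed set unless it is $o(1)$-close to identity), which happens precisely when the components have \emph{equal} size $1/k_n$ and every cluster permutation acts by a genuine permutation of the $k_n$ blocks.

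In that remaining case I would pass to the \emph{double} centralizer. When $G_n$ surjects onto $\mathrm{Sym}(k_n)$ (or at least onto a transitive subgroup) by permuting equal-sized blocks, an element of $C(C(\sigma(\Gamma)))$ must in particular commute, up to $o(1)$, with every block permutation; arguing as in the double-centralizer computation for the left–right action of $\mathrm{Sym}(m)\times\mathrm{Sym}(m)$ on $\mathrm{Sym}(m)$ sketched in the introduction, this forces such an element to be (asymptotically) block-diagonal and, on the common model component, to lie in the centralizer of that component's Schreier graph — which is $1$-discrete by \autoref{cor:centralizer-is-an-algebraic-ultraproduct} applied to a single expander component. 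But the block structure itself then yields a non-$1$-discrete element of $C(C(\sigma(\Gamma)))$: take the permutation that fixes one block pointwise and acts as a fixed-point-free involution pairing up the remaining $k_n - 1$ (or $k_n$) blocks by the intertwining matchings above — it commutes with everything in $C(\sigma(\Gamma))$ up to $o(1)$ because $C(\sigma(\Gamma))$ only sees the model component and the block permutation group, yet it moves exactly a $(1 - 1/k_n)$-fraction of vertices, i.e. sits at distance $1 - 1/k_n \in (0,1)$ from the identity. In either case we contradict $1$-discreteness of one of the two centralizers, so the components cannot split, and the approximation is a single $c$-expander sequence with $c$ depending only on the Kazhdan constant (the constant from \autoref{thm:kun}).

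The main obstacle I anticipate is the bookkeeping in the last paragraph: making precise that the double centralizer "only sees" the model component and the symmetric group of blocks — i.e. that every element of $C(\sigma(\Gamma))$ is, up to $o(1)$, determined by its action on one block together with a block permutation — and that the candidate block-pairing involution genuinely lies in $C(C(\sigma(\Gamma)))$ rather than merely almost-commuting with a dense subset. This requires the cluster machinery of \autoref{thm:kun-thom} to be applied uniformly across the $k_n$ components simultaneously (so one needs the expansion constants of all components to be bounded below by the same $c$, which is exactly what Kun's theorem provides) and a careful interplay between the additive errors $\varepsilon$, $\delta$ and the component-size lower bound $\eta$, so that all the almost-homomorphism estimates stay inside the self-improving regime where clusters form a group.
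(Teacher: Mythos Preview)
Your overall strategy---invoke Kun's theorem to reduce to a disjoint union of expanders and then derive a contradiction from the presence of several components---matches the paper. The execution, however, diverges, and the key step has a real gap.

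The problematic step is the construction of $\rho_n$: a partial bijection between two distinct expander components that ``intertwines the $\sigma_n(S)$-actions up to error $o(1)$.'' Your justification is that both components look locally like $\Cay(\Gamma,S)$, but local similarity does not yield a global almost-intertwiner. Two transitive expander actions of a Kazhdan group on sets of comparable size need not be approximately conjugate; e.g.\ two non-isomorphic finite quotients of $\Gamma$ of the same order give Cayley-graph components between which no $\varepsilon$-intertwiner exists for small $\varepsilon$. A greedy/BFS matching will typically cover only an $o(1)$-fraction of vertices before running into global obstructions---precisely because $\Gamma$ is non-amenable there is no F\o lner exhaustion to push the matching across. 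Whether any two expander sofic approximations of a fixed Kazhdan group are asymptotically conjugate is essentially a stability question and cannot be assumed. Your ``remaining case'' inherits the same problem (the block-pairing involution again needs intertwiners), and the dichotomy itself is off: with $k_n\geq 3$ equal-size blocks, swapping two and fixing the rest already gives distance $2/k_n<1$, so the case split does not line up with ``all components equal size.''

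The paper sidesteps all of this. It never tries to match components. Instead it takes the largest component $Y_n$, writes the action as a product $\sigma_1\times\sigma_2$ on $Y_n\sqcup Y_n^c$, and uses the trivial inclusion $C(\sigma_1(\Gamma))\times C(\sigma_2(\Gamma))\subset C(\sigma(\Gamma))$. Then a one-line dichotomy finishes: if $C(\sigma_1(\Gamma))\neq\{1\}$, any element of the form $(c_1,1)$ has support at most $\lim_{\mathcal V}|Y_n|/|X_n|<1$, contradicting $1$-discreteness of $C(\sigma(\Gamma))$; if $C(\sigma_1(\Gamma))=\{1\}$, then $C(C(\sigma(\Gamma)))$ contains all of $\prod_{\mathcal V}\Sym(Y_n)$ supported on $Y_n$, contradicting $1$-discreteness of the double centralizer. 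No intertwining, no block bookkeeping, no cluster-group analysis across components.
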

\begin{proof}
By \autoref{thm:kun}, we can assume that the graphs $X_n$ induced by our sofic approximation of $\Gamma$ are disjoint unions of expanders; let $Y_n$ be a component of maximal size in $X_n$.

We claim that $\lim\limits_{n\to \mc U} |Y_{n}|/|X_n| = 1$. Indeed, suppose that $\lim\limits_{n\to \mc U} |Y_{n}|/|X_n| = {\alpha} < 1$. If {$\alpha>0$}, then the sofic approximation decomposes as the direct product { $\sigma = \sigma_1\times \sigma_2$} of its restrictions to $Y_{n}$ and $Y_{n}^c$, and therefore the centralizer $\Cnt(\sigma(\Gamma))$ contains $\Cnt(\sigma_1(\Gamma))\times \Cnt(\sigma_2(\Gamma))$. If $\Cnt(\sigma_1(\Gamma))$ is non-trivial, the elements of the form $(c_1,1)\in \Cnt(\sigma_1(\Gamma))\times \Cnt(\sigma_2(\Gamma))$ have support at most $\alpha < 1$ which contradicts the assumption that $\Cnt(\sigma(\Gamma))$ is $1$-discrete. If $\Cnt(\sigma_1(\Gamma))$ is trivial, then $\Cnt(\Cnt(\sigma_1(\Gamma)))$ contains the ultraproduct of $\Sym(Y_{n})$ which contradicts the assumption that $\Cnt(\Cnt(\sigma(\Gamma)))$ is $1$-discrete. The case $\alpha=0$ is similar. Indeed, now all components are small and we can replace $Y_n$ by a disjoint union of components of size approximately $|X_n|/2$ and repeat the argument above. This finishes the proof.
\end{proof}

\section{The first order theory of finite groups}

We do not claim any novelty for the results in this section. However, we found it convenient to recall some results in detail and write our own proofs of some basic observations.

\subsection{Characterizing alternating groups}

The following result is due to Felgner \cite{MR1107758}, see the work of Wilson \cite[Theorem 5.1]{MR1477188} for a proof.

\begin{theorem}[Felgner] \label{thm:felgner}There exists a FO-sentence $\varphi$ in the language of groups which is satisfied by a finite group if and only if it is non-abelian simple.
\end{theorem}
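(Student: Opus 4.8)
The plan is to build $\varphi$ from a finite disjunction, with one disjunct for each ``shape'' of finite simple group supplied by the classification of finite simple groups (CFSG); on top of this one takes, as a conjunct, the manifestly first-order condition ``non-abelian'' (namely $\exists x\,\exists y\ xy\ne yx$), which just makes explicit that the abelian simple groups $\mathbb Z/p$ are excluded. One should note at the outset that the obvious alternative --- trying to express ``simple'' directly --- does not go through: for a finite group $G$ one has that $G$ is simple if and only if $\langle g^G\rangle=G$ for every $g\ne 1$, but the number of conjugates of $g^{\pm1}$ required to exhaust $G$ is not bounded independently of $G$ (already a single $3$-cycle in $\Alt(n)$ needs at least $n/3$ of its conjugates), whereas a first-order sentence can only assert membership in a definable normal closure with a fixed bound on the number of factors. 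So the approach is instead to encode, uniformly, membership in each of the finitely many families produced by CFSG.

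The sporadic groups, together with any finite list of small exceptional cases, are dealt with in one stroke: a single finite group $H$ is characterised up to isomorphism by one first-order sentence --- enumerate $|H|$ distinct elements, spell out the multiplication table, and assert that there are no further elements, a sentence which already forces finiteness --- so one simply takes the disjunction of these sentences over the sporadic groups. The real task is then to produce a single formula, \emph{independent of the parameter}, recognising each of the two infinite kinds of family: the alternating groups $\Alt(n)$ for $n\ge 5$, and the (finitely many) families of groups of Lie type. For the alternating groups I would first-order interpret the permutation domain $\{1,\dots,n\}$ inside $\Alt(n)$ --- for instance as a suitable conjugacy class of involutions, or of point-stabiliser-type subgroups, pinned down through the characteristic shape of centralisers of involutions --- together with enough incidence structure, and then assert that $G$ acts as exactly the even permutations of this interpreted set (evenness being expressible once the $3$-cycle-like elements have also been interpreted). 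For a group of Lie type $X(q)$ I would first-order interpret the underlying field $\mathbb F_q$ --- the standard mechanism being that root subgroups are elementary abelian $p$-groups on which maximal tori act so as to reproduce both field operations --- and then assert, using a bounded amount of $BN$-pair and Steinberg-relation data, that $G$ is the matrix group $X$ over the interpreted field. The sentence $\varphi$ is then the conjunction of the ``non-abelian'' clause with the disjunction of the alternating sentence, the finitely many Lie-type family sentences, and the sporadic disjunction; one then verifies that each disjunct, read in a finite group, forces that group to be a member of the corresponding family, so that a finite group satisfies $\varphi$ precisely when it is non-abelian simple.

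The hard part is exactly the uniformity just invoked: exhibiting fixed formulas that decode $\{1,\dots,n\}$ out of $\Alt(n)$, and $\mathbb F_q$ together with the ambient matrix structure out of $X(q)$, with the formulas not depending on $n$ or on $q$, and then checking the reverse implication --- that a \emph{finite} model of such a disjunct genuinely lies in the intended family, with no spurious finite models (spurious infinite ones do no harm). This requires real structural input on finite simple groups: descriptions of maximal subgroups and of centralisers of involutions, generation bounds relative to the natural parameters, and the first-order interpretability of the base field in a group of Lie type. Carrying this out in detail is precisely the content of the work of Felgner and of Wilson cited above; one must also keep track of the coincidences between families (such as $\Alt(5)\cong\PSL_2(5)$ and the exceptional isomorphisms in low rank), but these affect only the bookkeeping of the finite exceptional list, not the overall scheme.
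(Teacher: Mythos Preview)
Your proposal outlines a genuinely different route from the one the paper records. The paper does not prove the theorem in full but, following Wilson's account of Felgner, writes down an explicit short sentence $\varphi=\varphi_1\wedge\varphi_2$: here $\varphi_2$ asserts that every element is a commutator, and $\varphi_1$ asserts that for all $g\ne 1$ and all $h$ with $C(g,h)\ne\{1\}$, the normal core of the set $C(g,h)\cdot C(C(g,h))$ is trivial. This is a single structural condition on centralisers, not a disjunction over the CFSG families; the classification enters only in \emph{verifying} that this sentence characterises the non-abelian finite simple groups, not in the shape of the sentence itself. So your opening claim --- that one cannot express simplicity ``directly'' and must instead enumerate families --- is too pessimistic: Felgner's insight is precisely a direct first-order surrogate for simplicity that sidesteps bounded normal-closure generation altogether.

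Your scheme (interpret $\{1,\dots,n\}$ inside $\Alt(n)$, interpret $\mathbb F_q$ inside each Lie-type family, rule out spurious finite models, and take the disjunction) is a plausible alternative, and versions of these interpretations do appear in the model-theory literature. But your final sentence misattributes the content of the cited references: Felgner and Wilson do \emph{not} carry out this family-by-family interpretation programme; they produce and verify the centraliser sentence above. If you wish to pursue your route you would need different sources for the uniform interpretability statements and, in particular, for the converse direction (that every finite model of each disjunct lies in the intended family), which you correctly flag as the hard step but then delegate to references that do not in fact contain it.
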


For convenience, let us spell out a concrete FO-sentence that characterizes simplicity among non-abelian finite groups. Following Wilson's account of Felgner's result, we set: {
$$\varphi_1 = \left(\forall g, h\ (g\neq 1 \wedge \Cnt(g,h) \neq \{1\}) \rightarrow \left( \bigcap_{k \in G} (\Cnt(g,h)\Cnt(\Cnt(g,h)))^k = \{1\} \right)\right)$$
The sentence $\varphi_1$ asserts that for every non-trivial element $g$ and every element $h$ such that the centralizer of $g$ and $h$ is non-trivial, the intersection of all conjugates of the product of the centralizer of $g$ and $h$ and its own centralizer is trivial. This condition is satisfied by all non-abelian finite simple groups, but also by some non-simple groups. To exclude these, we also need to assert that every element is a commutator:
$$\varphi_2 = (\forall g\ \exists h_1,h_2 \ (g=[h_1,h_2])).$$
Then, a finite group satisfies $\varphi \coloneqq \varphi_1 \wedge \varphi_2$ if and only if it is non-abelian simple. Back when Felgner and Wilson studied this problem and before the Ore Conjecture (asserting that every finite non-abelian simple would in fact satisfy $\varphi_2$) was proven in \cite{MR2654085}, one had to replace $\varphi_2$ by a more complicated sentence asserting that every element is a product of at most $k$ commutators for some fixed $k$.}

Starting with this FO-sentence, it is possible to characterize more specific families of non-abelian finite simple groups:
\begin{proposition} \label{prp:alternating}
There exists a FO-sentence in the language of groups that is satisfied for a {finite} simple group if and only if it is alternating.
\end{proposition}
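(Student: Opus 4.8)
The plan is to characterize alternating groups among finite simple groups by a first-order sentence that detects the existence of the natural permutation action. The key idea is that in $\Alt(n)$ with $n \geq 7$, the point stabilizers are exactly the maximal subgroups isomorphic to $\Alt(n-1)$, and more importantly, $\Alt(n)$ is generated by a $3$-cycle together with its conjugates, and the combinatorics of $3$-cycles (when two of them commute, when their product has order $2$, $3$, or $5$) encodes the incidence geometry of the underlying set $\{1,\dots,n\}$. Concretely, I would first isolate the set of $3$-cycles inside a simple group by a first-order formula: an element $x$ is a $3$-cycle if $x^3 = 1$, $x \neq 1$, and $x$ has ``minimal support'' in the sense that its centralizer is as large as possible among non-trivial elements of order $3$ --- this minimality can be captured by saying that for a suitable family of such elements, the centralizer is a maximal element of the poset of centralizers, which is first-order expressible since we only quantify over group elements. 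One then recovers the set $\Omega$ of points as cosets/equivalence classes of commuting families of $3$-cycles, and recovers the action, all definably.

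Following the classical approach (Felgner, Wilson), I would instead proceed more efficiently using the fact that Felgner's work already gives us a FO-sentence $\varphi$ cutting out the non-abelian finite simple groups, and there are known uniform first-order descriptions distinguishing the families in the classification. The cleanest route: among finite simple groups, alternating groups $\Alt(n)$ for $n \geq 5$ are characterized by admitting an element $x$ of order $3$ (a $3$-cycle) such that the subgroup generated by the conjugates of $x$ by a single well-chosen element is large, together with a ``genus'' or ``minimal degree'' condition. But the truly robust approach avoids trying to be clever about which $3$-cycle we pick: I would use that $\Alt(n)$ is the unique finite simple group in which there is a conjugacy class $C$ of elements of order $3$ such that (a) any two elements of $C$ either commute or generate a group isomorphic to $\Alt(3)$-wr-type configurations realizing all of $\langle (123),(345)\rangle \cong \Alt(5)$, $\langle(123),(145)\rangle$, etc., and (b) the permutation representation of the group on $C$ via conjugation, restricted to the ``adjacency'' relation ``$x,y$ do not commute and $xy$ has order $5$'', reconstructs the Johnson-type graph whose automorphism group is $\Sym(n)$. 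All of conditions (a), (b) are first-order: they are statements about finitely many group elements and the relations among their products and orders, possibly with some bounded quantifier alternation, and crucially they hold for $\Alt(n)$ but fail for the finitely many sporadic groups and for the groups of Lie type (which can be excluded because their element-order spectra and commuting patterns of order-$3$ elements differ --- one can also simply append a sentence excluding each of the finitely many families of Lie type by the known first-order characterizations, or exclude Lie type in bounded rank by bounds on $\exp$ and handle unbounded rank by the fact that classical groups of large rank contain no conjugacy class with the required ``tree-like'' non-commuting geometry).

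So the steps, in order, are: (1) take Felgner's sentence $\varphi$ from \autoref{thm:felgner} guaranteeing the group is finite non-abelian simple; (2) write a first-order formula $\theta(x)$ picking out ``$x$ behaves like a $3$-cycle'', using order $3$ together with a centralizer-maximality condition among order-$3$ elements; (3) write a first-order formula expressing ``the conjugation action of the group on $\{x : \theta(x)\}$, with the definable adjacency relations coming from orders of products, is isomorphic to the natural action of $\Sym(n)$ on $3$-cycles in $\Alt(n)$'' --- this is where one pins down that the group is alternating, by reconstructing $n$ and the set $\{1,\dots,n\}$ inside the group in a uniform, $n$-independent way; (4) verify that $\Alt(n)$ satisfies the conjunction for all $n \geq 5$, which is a direct check on $3$-cycles; (5) verify that no other finite simple group satisfies it, which uses the classification of finite simple groups --- the sporadic groups are finitely many so excluded by brute force, and for the Lie type groups one shows that no conjugacy class of order-$3$ elements has the rigid non-commuting incidence pattern characteristic of $3$-cycles (alternatively, append first-order sentences distinguishing $\PSL$, etc., which exist by the same body of work).

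The main obstacle is step (3)/(5): making the reconstruction of the natural action and the exclusion of groups of Lie type genuinely uniform in $n$ with a \emph{single fixed} first-order sentence --- one has to ensure that the quantifier complexity does not grow with $n$, which forces one to encode the set $\{1,\dots,n\}$ as a definable quotient (e.g. as $\Omega = \{$maximal families of pairwise-commuting $3$-cycles sharing a common ``moved point''$\}$) rather than naively, and then to recognize ``the group acts as the full symmetric or alternating group on $\Omega$'' by a bounded-complexity sentence. This is exactly the kind of delicate bookkeeping carried out by Wilson in \cite{MR1477188}; I expect the proof to cite that framework and adapt it, rather than building everything from scratch. The check that $\Alt(n)$ works (step 4) is routine $3$-cycle combinatorics; the check that Lie-type groups fail (step 5) is where one either invokes the detailed structure theory or cites the existing first-order characterizations of the other simple families.
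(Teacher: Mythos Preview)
Your outline is a plausible route, but it is genuinely different from the paper's argument, and considerably more involved than what the paper actually does.

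The paper's sketch does not attempt to reconstruct the permutation action at all. Instead, it uses the classification to reduce to distinguishing ${\rm Alt}(n)$ from the families of finite simple groups of Lie type of unbounded rank, and then exploits the following dichotomy: in a group of Lie type in characteristic $p$, the centralizer of a semisimple element of prime order $q\neq p$ is a central product of quasi-simple groups, whereas the centralizer of a nontrivial unipotent element of order $p$ has a large abelian normal subgroup. This asymmetry lets one \emph{detect the characteristic} $p$ by a first-order sentence. Alternating groups exhibit the ``exceptional'' (semisimple-type) centralizer behaviour at \emph{every} prime, so they are singled out among the remaining families as exactly those simple groups showing the exceptional behaviour at two different primes. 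No reconstruction of $\{1,\dots,n\}$, no $3$-cycle combinatorics, no definable quotient is needed.

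Your approach---isolate $3$-cycles via centralizer-maximality among order-$3$ elements, rebuild $\Omega$ as a definable quotient, and then assert that the conjugation action realises the full alternating group---can be made to work (this circle of ideas appears in the model theory of finite simple groups), but you correctly flag step~(3)/(5) as the crux, and your discussion there remains programmatic: you list several possible incidence relations and defer to \cite{MR1477188} without committing to a specific formula or verifying uniformly across Lie type that no order-$3$ class mimics the $3$-cycle geometry. By contrast, the paper's characteristic-detection trick sidesteps the need to encode a variable-size set $\Omega$ inside a single fixed sentence, which is exactly the bookkeeping you identify as the main obstacle. If you pursue your route, be aware that ``centralizer is maximal under inclusion among centralizers of order-$3$ elements'' is the right first-order surrogate for ``largest centralizer'', and that the final step ``$G$ acts as ${\rm Alt}(\Omega)$'' is most cleanly expressed as ``every element of $G$ inducing a $3$-cycle on $\Omega$ lies in $G$, and these generate $G$''---but you still owe the exclusion of Lie type, which the paper handles in one stroke via the centralizer dichotomy.
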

\begin{proof}
{By the classification of finite simple groups, every finite simple group is either cyclic of prime order, alternating, of Lie type, or sporadic.
Since the abelian finite simple groups are exactly the cyclic groups $C_p$ of prime order, we may exclude them by conjoining the first-order sentence $\exists x\,\exists y\,([x,y]\neq 1)$, and since the sporadic simple groups form a finite list we may also exclude them one by one. So it suffices to treat alternating groups versus groups of Lie type.

We build a sentence that holds in $\Alt(n)$ for all sufficiently large $n$, fails in every finite simple group of Lie type, and then patch the finitely many remaining alternating groups by a finite disjunction.

By a theorem of J.\,S.\ Wilson, there is an $\mathrm{FO}$-formula $\rho(z)$ (in the language of groups) such that for every finite group $H$ and every $h\in H$,
\[
H \models \rho(h)\quad\Longleftrightarrow\quad h\in \Rad(H),
\]
where $\Rad(H)$ denotes the solvable radical of $H$ \cite{wilson-radical}. For a parameter $x$ in a group $G$, the centralizer $\Cnt_G(x)$ is definable by the formula $[u,x]=1$. Relativizing $\rho$ to $\Cnt_G(x)$ yields a formula $\rho_x(z)$ such that, for finite $G$,
\[
G \models \rho_x(g)\quad\Longleftrightarrow\quad g\in \Rad(\Cnt_G(x)).
\]
For each prime $p$, define a first-order sentences $\psi_p$ as follows. The sentence $\psi_p$ asserts that there exist elements $x,a,b$ such that
\[
x^2=1,\ x\neq 1,\ \rho_x(a),\ a^2=1,\ a\neq 1,\ [b,x]=1,\ \text{and }[a,b]\neq 1.
\]
Equivalently: there is an element of order $p$ such that the solvable radical of $\Cnt_G(x)$ contains a nontrivial involution $a$ which is \emph{not} central in $\Cnt_G(x)$.

Our first claim is that $\Alt(n)$ satisfies $\psi_2$ and $\psi_3$ for $n \geq 8$. Indeed,set
\[
x=(1\,2\,3)(4\,5\,6)\in \Alt(n).
\]
Then $\Cnt_G(x)$ contains the abelian normal subgroup
\[
A=\langle (1\,2\,3),(4\,5\,6)\rangle \cong C_3\times C_3,
\]
hence $A\le \Rad(\Cnt_G(x))$ and so $\rho_x(a)$ holds for $a=(1\,2\,3)$.
Moreover,
\[
b=(1\,4)(2\,5)(3\,6)(7\,8)\in \Alt(n)
\]
centralizes $x$ (it swaps the two $3$-cycles and fixes the product), but conjugates $a$ to $(4\,5\,6)\neq a$, so $[a,b]\neq 1$. Thus $\Alt(n)\models \psi_3$ for $n\ge 8$.

Now, set
\[
x=(1\,2)(3\,4)(5\,6)(7\,8)\in \Alt(n).
\]
Let
\[
V=\{\,\text{products of an even number of }(1\,2),(3\,4),(5\,6),(7\,8)\,\}\le \Alt(n).
\]
Then $V\cong C_2^3$ is a normal $2$-subgroup of $\Cnt_G(x)$ (it is the base group inside the natural wreath-product part of $\Cnt_G(x)$), hence $V\le \Rad(\Cnt_G(x))$ and so $\rho_x(a)$ holds for $a=(1\,2)(5\,6)\in V$.
Finally,
\[
b=(1\,3)(2\,4)(5\,7)(6\,8)\in \Alt(n)
\]
centralizes $x$ (it swaps the $2$-cycles pairwise), but conjugates $a=(1\,2)(5\,6)$ to $(3\,4)(7\,8)\neq a$, hence $[a,b]\neq 1$.
Thus $\Alt(n)\models \psi_2$ for $n\ge 8$.

Consequently,
\[
\Alt(n)\models (\psi_2\wedge \psi_3)\qquad(n\ge 8).
\]

Let $G$ be a finite simple group of Lie type over $\mathbb{F}_q$ of defining characteristic $p$ (twisted or untwisted).
We are going to show that $G$ does not satisfy $\psi_l$ if $l$ is a prime different from $p$. Indeed, if $x\in G$ has prime order $\ell\neq p$, then $x$ is semisimple. A standard structural description of centralizers of semisimple elements in finite groups of Lie type implies that $\Rad(\Cnt_G(x))$ is abelian and, crucially, it lies in the center of $\Cnt_G(x)$:
\[
\Rad(\Cnt_G(x)) \le \Znt(\Cnt_G(x)).
\]
In particular, every element $a\in \Rad(\Cnt_G(x))$ commutes with every $b\in \Cnt_G(x)$, so the defining commutator condition $[a,b]\neq 1$ in $\psi_\ell$ can never be met.
For untwisted groups this is proved in \cite{carter-semisimple}, and for twisted groups in \cite{deriziotis-liebeck}; see also \cite{carter}. Combining the previous observations, we conclude that if a non-abelian finite simple group is not sporadic and satisfies $\psi_2\wedge \psi_3$, then it is an alternating group of degree at least $8$. Finally, we can account for $\Alt(n)$ for $n \in \{5,6,7\}$ one by one. This concludes the proof.}
\end{proof}

\begin{proposition}   
\label{thm:congr} Let $q \geq 3$ be a prime number and $l \in \mathbb Z$ with $0 \leq l \leq q-1.$
There exists a FO-sentence $\varphi(l,q)$ in the language of groups such that a finite group $G$ satisfies $\varphi(l,q)$ if and only if $G$ is isomorphic to ${\rm Alt}(n)$ with $n \equiv l \mod q$ {and $n \geq 5$}.
\end{proposition}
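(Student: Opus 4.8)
The plan is to first reduce to the case $G \cong \Alt(n)$ using the results recalled above, and then to express the congruence $n \equiv l \bmod q$ by quantifying over an element of order $q$ whose number of fixed points is prescribed.

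For the reduction, I would combine \autoref{thm:felgner} with \autoref{prp:alternating} to obtain a FO-sentence $\alpha$ that a finite group satisfies exactly when it is isomorphic to $\Alt(n)$ for some $n\geq 5$; since $|\Alt(n)| = n!/2$ is strictly increasing, $n$ is then a well-defined isomorphism invariant of $G$, and it suffices to conjoin to $\alpha$ a sentence saying $n\equiv l\bmod q$. The elementary input here is that an element $g\in\Alt(n)$ with $g^q=1$, $g\neq 1$ has cycle type $q^k1^m$ with $k\geq 1$ and $qk+m=n$ (and is automatically even, as $q$ is odd), and conversely that for a fixed $m\geq 0$ an element of order $q$ with exactly $m$ fixed points exists in $\Alt(n)$ if and only if $m\equiv n\bmod q$ and $n\geq m+q$. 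So I would fix $m := l+2q$ (hence $m\equiv l\bmod q$ and $m\geq 6$) and take as the main clause the sentence ``$\exists g\,(g^q=1\wedge g\neq 1\wedge g$ has exactly $m$ fixed points$)$'', which holds in $\Alt(n)$, for all large $n$, precisely when $n\equiv l\bmod q$. The finitely many small $n\equiv l\bmod q$ with $\Alt(n)$ simple would be handled by a separate finite disjunction of sentences ``$G\cong[\text{fixed finite group}]$'', and the main clause guarded by a sentence ``$|G|>B$'' so that it is applied only above the threshold.

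The substantial part is to show that ``$g$ has exactly $m$ fixed points'' is FO-expressible, uniformly in $n$ for $n$ large and for $g$ with $g^q=1$. I would fix a prime $p\in\{3,5\}$ with $p\neq q$ and proceed in three steps. (i) The class of single $p$-cycles is FO-definable for $n$ large: if $x^p=1$, $x\neq 1$, then for $x$ a single $p$-cycle its centralizer in $\Alt(n)$ is $\cong\mathbb{Z}/p\times\Alt(n-p)$ and the set $D_x := \{[a,b]\mid a,b\in C(x)\}$ equals $1\times\Alt(n-p)$ (using that every element of $\Alt(k)$, $k\geq 5$, is a commutator), a non-abelian simple group, whereas if $x$ is a product of $j\geq 2$ disjoint $p$-cycles the derived subgroup of $C(x)$ has a proper nontrivial normal subgroup; since $D_x$ is a subgroup exactly when it equals that derived subgroup (being a symmetric, conjugation-invariant subset that generates it), the condition ``$x^p=1\wedge x\neq 1\wedge D_x$ is a non-abelian simple group'' defines the $p$-cycles. (ii) A $p$-cycle $h$ commutes with $g$ (where $g^q=1$) if and only if $\operatorname{supp}(h)\subseteq\operatorname{Fix}(g)$: commutation forces $g$ to preserve the $p$-element set $\operatorname{supp}(h)$, and the restriction of $g$ there lies in $C_{\Sym(p)}(h|_{\operatorname{supp}(h)})\cong\mathbb{Z}/p$, hence has order dividing $\gcd(p,q)=1$. (iii) Two commuting $p$-cycles have disjoint supports if and only if neither lies in the cyclic group generated by the other (else $\langle h,h'\rangle$ would be an abelian subgroup of $\Sym(p)$ of order $p^2$), and for any two $p$-cycles the intersection size $|\operatorname{supp}(h)\cap\operatorname{supp}(h')|\in\{0,\dots,p-1\}$ is determined by a bounded amount of first-order data (orders of small words in $h,h'$ and the isomorphism type of $\langle h,h'\rangle$), hence is FO-expressible since $p$ is fixed.

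Granting (i)--(iii), for $g$ with $g^q=1$ and any $m'\geq p$ the condition $|\operatorname{Fix}(g)|\geq m'$ becomes the existence of $p$-cycles $h_1,\dots,h_a$, all commuting with $g$, consisting of several pairwise disjoint ones together with at most one further $p$-cycle overlapping a chosen one in a prescribed number of points, whose supports cover exactly $m'$ points of $\operatorname{Fix}(g)$; writing $m'=ap+b$ with $0\leq b<p$ shows this is prescribed by a single formula. Then ``$g$ has exactly $m$ fixed points'' is ``$|\operatorname{Fix}(g)|\geq m$ and not $|\operatorname{Fix}(g)|\geq m+1$'', which is FO since $m=l+2q\geq 6>p$, completing the construction of $\varphi(l,q)$. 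I expect the main obstacle to be the uniform-in-$n$ first-order definability of the $p$-cycles in step (i); the rest is routine bookkeeping with supports together with the elementary observation about which order-$q$ cycle types occur in $\Alt(n)$.
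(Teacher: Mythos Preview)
Your argument is correct and takes a genuinely different route from the paper. Both proofs begin by reducing to $G\cong\Alt(n)$ via \autoref{thm:felgner} and \autoref{prp:alternating}. From there the paper encodes ``$g$ has exactly $l$ fixed points'' in a single stroke through the global shape of $C(g)$: after replacing $l$ by $l+q$ so that $l\geq 4$, it takes $\varphi(l,q)$ to assert the existence of $g$ with $g^q=1$ such that $C(g)$ contains a copy of the fixed finite group $\Alt(l)$ (written out by its multiplication table) for which $\Alt(l)\times C_{C(g)}(\Alt(l))$ has index at most~$2$ in $C(g)$; the wreath-product description of centralizers in $\Alt(n)$ then forces $|\mathrm{Fix}(g)|=l$. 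You instead build a fixed-point \emph{counter}: first FO-defining single $p$-cycles via a relativized Felgner--Wilson sentence applied to the commutator set $D_x\subseteq C(x)$, then using that a $p$-cycle commutes with $g$ iff its support lies in $\mathrm{Fix}(g)$, and finally packing such $p$-cycles with controlled overlaps to express $|\mathrm{Fix}(g)|\geq m'$ for each fixed $m'$. The paper's method is considerably shorter and sidesteps the case analysis hidden in your step~(i) (checking that $[C(x),C(x)]$ is never non-abelian simple when $x$ is a product of $j\geq 2$ disjoint $p$-cycles, across all residual values of $n-jp$); your method is longer but more modular, producing a reusable FO predicate for $|\mathrm{Fix}(g)|\geq m'$ and avoiding reliance on the specific index-$2$ phenomenon.
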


\begin{proof}
{By \autoref{thm:felgner} and \autoref{prp:alternating}, we may already assume that $G$ is an alternating group and $n \geq 5$. Let $l_0 \ge 5$ be the smallest integer with $l_0\equiv l\pmod q$. We define $\varphi(l,q)$ as the sentence
saying: there exist $g\neq 1$ and a subgroup $A\le \Cnt_G(g)$ such that (i) $g^q=1$,
(ii) $A\cong \Alt(l_0)$, (iii) $[\Cnt_G(g):A\cdot \Cnt_{\Cnt_G(g)}(A)]\le 2$.

Formally, (ii) is expressed as usual by existentially quantifying $m=|\Alt(l_0)|$ distinct elements
$a_1,\dots,a_m$ in $G$ and asserting that they satisfy the multiplication table of $\Alt(l_0)$; the set $\{a_1,\dots,a_m\}$ is then the subgroup $A$.
The condition $A\le \Cnt_G(g)$ is $\bigwedge_i [a_i,g]=1$.
The centralizer $\Cnt_{\Cnt_G(g)}(A)$ is definable by $[x,g]=1\wedge \bigwedge_i [x,a_i]=1$,
and membership in $A\cdot \Cnt_{\Cnt_G(g)}(A)$ is definable by an $\exists$-formula.
The index bound $\le 2$ is expressible by the usual ``at most two cosets'' sentence.

We need to prove two implications. Assume $G\cong \Alt(n)$ with $n\equiv l\pmod q$.
Then $n=rq+L$ for some $r\ge 0$. Choose $g\in \Alt(n)$ as a product of $r$ disjoint $q$-cycles and $L$ fixed points.
In $\Sym(n)$ one has the standard centralizer description
\[
\Cnt_{\Sym(n)}(g)\ \cong\ (C_q\wr S_r)\times S_L.
\]
Intersecting with $\Alt(n)$ changes index by at most $2$.
Let $A\cong \Alt(l_0)$ be the alternating group on the $l_0$ fixed points. Then $A\le \Cnt_{\Alt(n)}(g)$,
$A$ commutes with the rest of the centralizer, and
$[\Cnt_{\Alt(n)}(g):A\cdot \Cnt_{\Cnt_{\Alt(n)}(g)}(A)]\le 2$.
Hence, we obtain $\Alt(n)\models \varphi(l,q)$.

Conversely, assume $G\models \varphi(l,q)$.
Fix witnesses $g$ and $A\cong \Alt(l_0)$ for $\varphi(l,q)$ and set $C=\Cnt_G(g)$, $K=A\cdot \Cnt_C(A)$.
Then $[C:K]\le 2$, hence $K\lhd C$ and therefore $A\lhd K$ (indeed $A$ commutes with $\Cnt_C(A)$ and
$\Znt(A)=1$ for $L\ge 5$), so $A$ is subnormal in $C$; since $A$ is non-abelian simple, it follows that
$A\lhd C$.

Write the cycle type of $g$ in the natural action of $\Alt(n)$ as $q^r1^m$
(where $m$ is the number of fixed points), so $n=rq+m$ and
\[
\Cnt_{\Sym(n)}(g)\ \cong\ (C_q\wr S_r)\times S_m.
\]
Let $Q\cong C_q^r$ be the base subgroup of the wreath product; it is a normal abelian $q$-subgroup
of $\Cnt_{\Sym(n)}(g)$, hence $Q\cap \Alt(n)\lhd C$ as well.
Since $A\lhd C$ and $Q\cap \Alt(n)\lhd C$ with $Q\cap \Alt(n)$ abelian, the intersection
$A\cap (Q\cap \Alt(n))$ is a normal abelian subgroup of the non-abelian simple group $A$, hence trivial.
Consequently $A$ injects into the quotient of $C$ by $Q\cap \Alt(n)$ and, in particular, $A$
cannot project nontrivially to the $S_r$-part (because any nontrivial projection would act by permuting
the $r$ factors of $Q$, forcing $\Cnt_Q(A)$ to be a proper subgroup of $Q$, which would make the index
$[C:K]$ divisible by $q\ge 3$, contradicting $[C:K]\le 2$).
Thus $A$ lies in the $S_m$-factor, i.e.\ it comes from permutations of the fixed points.

Now $A\lhd C$ implies that the image of $A$ is a nontrivial normal subgroup of the fixed-point factor
$S_m$ (or of its index-$2$ subgroup inside $\Alt(n)$), hence it contains $\Alt(m)$.
Since $A\cong \Alt(l_0)$, this forces $m=l_0$.
Therefore $n=rq+l_0\equiv l_0\equiv l\pmod q$, as required.}
\end{proof}

Results like the previous proposition are {probably} well-known {to experts}, see for example the proof of Theorem 3.1 in \cite{thomasetal}. It is natural to wonder what kind of subsets of the natural numbers can be defined in this way. More precisely:

\begin{question}
For which subsets $A \subset  \mathbb N$ does there exist a FO-sentence $\varphi$ in the language of groups such that
$n \in A$ if and only if ${\rm Alt}(n)$ satisfies $\varphi$?
\end{question}

\begin{remark}
As an example, note that the sentence
$$\exists g\  \forall h \ (gh=hg) \rightarrow (h=1 \vee (\exists k \ hk=kg))$$
characterizes ${\rm Sym}(n)$ with $n=p$ or $n=p+1$ and $p$ a prime within the set of all symmetric groups. Hence, more exotic sets than mere arithmetic progressions are definable using FO-sentences in the language of groups. {See \cite{MO302743} for a discussion triggered by a question of M. Sapir.}
\end{remark}

\subsection{Non-isomorphic ultraproducts of alternating groups}

The following lemma is an elementary consequence of Dirichlet's theorem on primes in arithmetic progressions. We denote the set of prime numbers by $\mathbb P$.

\begin{lemma} \label{lem:dirich} For every prime $q \geq 7$, there exist residues $a_{q,0},a_{q,1} \in \{0,\dots,q-2\}$ with $a_{q,0} \neq a_{q,1}$ such that the following holds:
For any finite set of prime numbers $Q \subset \mathbb P_{\geq 7}$ and any choice $\gamma \colon Q \to \{0,1\}$, there exists a prime $p \geq 13$, such that $p^4-1$ is congruent to $a_{q,\gamma(q)}$ modulo $q$ for all $q \in Q.$
\end{lemma}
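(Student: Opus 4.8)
The plan is to reduce the statement to a routine application of Dirichlet's theorem on primes in arithmetic progressions by first understanding the image of the map $p \mapsto p^4 - 1 \bmod q$ on the set of primes $p$, as $q$ ranges over $\mathbb{P}_{\geq 7}$. Fix a prime $q \geq 7$. Since $\gcd(p, q) = 1$ for all but one prime $p$, we may work inside the multiplicative group $(\mathbb{Z}/q\mathbb{Z})^\times$, which is cyclic of order $q - 1$. The key observation is that the map $x \mapsto x^4$ on $(\mathbb{Z}/q\mathbb{Z})^\times$ is far from surjective: its image is the subgroup of fourth powers, which has index $\gcd(4, q-1) \in \{2, 4\}$ (it is $4$ when $q \equiv 1 \bmod 4$ and $2$ when $q \equiv 3 \bmod 4$; note $q \geq 7$ guarantees $q - 1 \geq 6 > 4$ so the image is a proper subgroup). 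Consequently, the set of achievable values of $p^4 \bmod q$ — hence of $p^4 - 1 \bmod q$ — as $p$ ranges over primes coprime to $q$, is exactly the coset structure $\{y - 1 : y \in ((\mathbb{Z}/q\mathbb{Z})^\times)^4\}$, provided we can realize each fourth power $y$ as $p^4$ for infinitely many primes $p$. This last point is where Dirichlet enters: for any residue $r \in (\mathbb{Z}/q\mathbb{Z})^\times$, there are infinitely many primes $p \equiv r \bmod q$, and all such primes satisfy $p^4 \equiv r^4 \bmod q$. Since $q \geq 13$ is automatic once $q \geq 7$... wait, actually we need $p \geq 13$, which is fine since infinitely many primes lie in any admissible residue class.

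With this in hand, the strategy is as follows. First, pick $a_{q,0}$ and $a_{q,1}$ to be two distinct elements of the set $\{r^4 - 1 \bmod q : r \in (\mathbb{Z}/q\mathbb{Z})^\times\}$, which has size at least $2$ since the number of fourth powers is $(q-1)/\gcd(4, q-1) \geq (q-1)/4 \geq 6/4 > 1$, so there are at least two distinct fourth powers, hence at least two distinct values $r^4 - 1$. (One may wish to be slightly careful: the values $r^4 - 1$ are pairwise distinct precisely when the fourth powers $r^4$ are pairwise distinct, which they are as elements of $(\mathbb{Z}/q\mathbb{Z})^\times$.) Ensure also $a_{q,0}, a_{q,1} \in \{0, \dots, q-2\}$ by choosing representatives appropriately — note $r^4 - 1 \not\equiv q - 1 \bmod q$ would require $r^4 \equiv 0$, impossible, so $r^4 - 1 \bmod q$ naturally lands in $\{0, \dots, q-2\}$ when we take the representative in $\{0, \dots, q-1\}$ and observe it is never $q-1$; actually $r^4 - 1 \equiv 0$ is possible (when $r^4 \equiv 1$), and the representative is then $0 \in \{0,\dots,q-2\}$, so this is fine. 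For each $q$, also fix residues $r_{q,0}, r_{q,1} \in (\mathbb{Z}/q\mathbb{Z})^\times$ with $r_{q,i}^4 \equiv a_{q,i} + 1 \bmod q$.

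Next, given a finite set $Q \subset \mathbb{P}_{\geq 7}$ and $\gamma \colon Q \to \{0,1\}$, we want a prime $p \geq 13$ with $p \equiv r_{q,\gamma(q)} \bmod q$ for every $q \in Q$; then automatically $p^4 - 1 \equiv r_{q,\gamma(q)}^4 - 1 \equiv a_{q,\gamma(q)} \bmod q$ for all $q \in Q$. By the Chinese Remainder Theorem, the congruences $p \equiv r_{q,\gamma(q)} \bmod q$ for $q \in Q$ combine into a single congruence $p \equiv R \bmod M$ where $M = \prod_{q \in Q} q$ and $R$ is some residue with $\gcd(R, M) = 1$ (each $r_{q,\gamma(q)}$ being a unit mod $q$). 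By Dirichlet's theorem, there are infinitely many primes in this arithmetic progression, so in particular one with $p \geq 13$. This completes the argument.

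The main obstacle — though it is more of a bookkeeping point than a genuine difficulty — is verifying that one can choose \emph{two distinct} admissible residues $a_{q,0} \neq a_{q,1}$ uniformly for all $q \geq 7$; this hinges on the elementary fact that $(\mathbb{Z}/q\mathbb{Z})^\times$ has more than one fourth power whenever $q - 1 > 4$, i.e., $q \geq 7$, together with injectivity of $y \mapsto y - 1$. One should double-check the edge behavior at $q = 7$ (here $q - 1 = 6$, $\gcd(4,6) = 2$, so there are exactly $3$ fourth powers, giving $3$ choices of $a_{q,i}$ — more than enough) and confirm the representatives lie in $\{0, \dots, q-2\}$ rather than hitting $q-1$. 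Everything else is a direct invocation of the Chinese Remainder Theorem and Dirichlet's theorem.
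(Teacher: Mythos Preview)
Your proof is correct and follows essentially the same route as the paper: pick for each $q$ two distinct fourth powers in $(\mathbb Z/q\mathbb Z)^\times$ (the paper fixes one of them to be $1$, giving $a_{q,0}=0$), record fourth-root residues $r_{q,i}$, then use the Chinese Remainder Theorem to combine the congruences $p\equiv r_{q,\gamma(q)}\bmod q$ and apply Dirichlet's theorem to produce the required prime $p\geq 13$. The only cosmetic difference is that the paper argues via ``$(\mathbb Z/q\mathbb Z)^\times$ is not $4$-torsion'' to obtain a nontrivial fourth power, whereas you count fourth powers directly.
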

\begin{proof}
For every $q \geq 7$, the multiplicative group of residues $(\mathbb Z/q\mathbb Z)^\times$ is not $4$-torsion. Hence, there is a non-zero fourth power different from the residue class of $1$. We set $a_{q,0}=0$ and $a_{q,1}=c-1$, where $c$ is a representative of a non-trivial fourth power and $d^4 \equiv c \mod q$. We also set $b_{q,0}=1$ and $b_{q,1}:=d$. Now, let $Q \subset \mathbb P_{\geq 7}$ be finite and $\gamma$ be as above. By the Chinese Remainder Theorem, there exists $l \in \mathbb N$ which is congruent to $b_{q,\gamma(q)}$ modulo $q$ for all $q \in Q.$ Note that ${\rm gcd}(l,\prod_{q \in Q} q) =1$. Now, by Dirichlet's theorem, there exists a prime $p \geq 13$, which is congruent to $l$ modulo $\prod_{q \in Q} q.$ Clearly, $p$ is congruent to $b_{q,\gamma(q)}$ modulo $q$ for all $q \in Q$. However, this implies that $p^4-1$ is congruent to $a_{q,\gamma(q)}$ for all $q \in Q$, as required.
\end{proof}

\begin{theorem} \label{thm:ultra}
There are $2^{\aleph_0}$ ultrafilters on $\mathbb N$ supported on the set $\{ p^4-1 \mid p \geq 13, p \mbox{ prime}\}$ such that the ultraproducts $\prod^{\alg}_{\mc U} \Alt(m)$ are pairwise non-isomorphic.
\end{theorem}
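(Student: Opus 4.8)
The plan is to build a perfect tree of ultrafilters using the separating sentences $\varphi(l,q)$ from \autoref{thm:congr} together with the residue pairs $a_{q,0}\neq a_{q,1}$ furnished by \autoref{lem:dirich}. Fix an enumeration $q_1<q_2<\cdots$ of the primes $\geq 7$. For each infinite binary string $x\in\{0,1\}^{\mathbb N}$ and each $k$, let $A_k^x=\{p^4-1 : p\geq 13 \text{ prime},\ p^4-1\equiv a_{q_k,x(k)}\pmod{q_k}\}$. The first step is to check that for every $x$ the family $\{A_k^x : k\in\mathbb N\}$ has the finite intersection property: given $k_1<\cdots<k_m$, \autoref{lem:dirich} applied to $Q=\{q_{k_1},\dots,q_{k_m}\}$ and $\gamma=x|_Q$ produces a prime $p\geq 13$ with $p^4-1\in\bigcap_{i=1}^m A_{k_i}^x$, and in fact the lemma yields infinitely many such $p$ (Dirichlet gives infinitely many primes in the relevant progression), so each finite intersection is infinite. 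Hence $\{A_k^x\}_k$ extends to a non-principal ultrafilter $\mathcal U_x$ on $\mathbb N$ supported on $\{p^4-1 : p\geq 13 \text{ prime}\}$, with $A_k^x\in\mathcal U_x$ for all $k$.

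Second, I would verify that distinct strings give non-isomorphic ultraproducts. Suppose $x\neq y$ and pick $k$ with $x(k)\neq y(k)$; write $q=q_k$. Since $a_{q,0}\neq a_{q,1}$, fix an integer $l$ with $0\leq l\leq q-2$ and $l\equiv a_{q,x(k)}\pmod q$; then no $m$ with $m\equiv a_{q,y(k)}\pmod q$ satisfies $m\equiv l\pmod q$. By \autoref{thm:congr}, $\Alt(m)\models\varphi(l,q)$ iff $m\equiv l\pmod q$. Along $\mathcal U_x$ we have $m=p^4-1\equiv a_{q,x(k)}\equiv l\pmod q$ for $\mathcal U_x$-almost every $m$ (because $A_k^x\in\mathcal U_x$), so by {\L}o\'s's theorem $\prod^{\alg}_{\mathcal U_x}\Alt(m)\models\varphi(l,q)$; along $\mathcal U_y$ we have $m\equiv a_{q,y(k)}\not\equiv l\pmod q$ for $\mathcal U_y$-almost every $m$, so $\prod^{\alg}_{\mathcal U_y}\Alt(m)\models\neg\varphi(l,q)$. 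Since satisfaction of a first-order sentence is an isomorphism invariant, the two ultraproducts are not isomorphic. This produces $2^{\aleph_0}$ pairwise non-isomorphic ultraproducts, indexed by $\{0,1\}^{\mathbb N}$.

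A small technical point to address in the write-up: $\varphi(l,q)$ as stated in \autoref{thm:congr} characterizes $\Alt(n)$ with $n\equiv l\pmod q$ \emph{within the class of finite groups} (its proof invokes \autoref{prp:alternating} and \autoref{prp:alternating} in turn the classification), so to apply {\L}o\'s freely I should note that the relevant sentence can be taken to be $\varphi_{\mathrm{Alt}}\wedge\varphi(l,q)$ where $\varphi_{\mathrm{Alt}}$ is the sentence from \autoref{prp:alternating} composed with Felgner's $\varphi$; then the single first-order sentence ``is an alternating group with $n\equiv l\pmod q$'' transfers to the ultraproduct verbatim. Equivalently one argues directly: each $\Alt(m)$ in the support is a concrete alternating group, so the class of structures being ultraproduced already lies inside the $\varphi_{\mathrm{Alt}}$-definable class, and the residue condition is the only thing {\L}o\'s needs to track.

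I expect the main obstacle to be purely bookkeeping rather than conceptual: one must be careful that the support sets $A_k^x$ are simultaneously (i) contained in the prescribed support $\{p^4-1 : p\geq 13\}$, (ii) compatible along each branch in the strong sense that every finite intersection is infinite (so that a non-principal ultrafilter containing all of them exists), and (iii) genuinely separating, i.e.\ the residue $l$ chosen for the separating sentence must be realised $\mathcal U_x$-almost everywhere and avoided $\mathcal U_y$-almost everywhere. All three follow from \autoref{lem:dirich} and \autoref{thm:congr} essentially immediately once the indexing is set up correctly; the only real input is Dirichlet's theorem, already packaged inside \autoref{lem:dirich}.
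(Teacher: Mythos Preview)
Your proposal is correct and follows essentially the same approach as the paper: index ultrafilters by functions $\{0,1\}^{\mathbb P_{\geq 7}}$ (your enumeration $q_1<q_2<\cdots$ is just a relabelling), use \autoref{lem:dirich} to get the finite intersection property for the sets $A_k^x$, and then invoke \L o\'s with the sentences $\varphi(a_{q,\gamma(q)},q)$ from \autoref{thm:congr} to separate the ultraproducts. Your ``technical point'' about conjoining with $\varphi_{\mathrm{Alt}}$ is harmless but unnecessary---\L o\'s only needs that $\Alt(m)\models\varphi(l,q)$ iff $m\equiv l\pmod q$, which is exactly what \autoref{thm:congr} provides, regardless of how $\varphi(l,q)$ behaves on non-alternating groups.
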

\begin{proof}
For any function $\gamma \colon \mathbb P_{\geq 7} \to \{0,1\}$, we can construct an ultrafilter $\mathcal U(\gamma)$ such that 
\begin{eqnarray} \label{eq:ultra}
\{p^4-1 \in \mathbb N \mid p \in \mathbb P_{\geq 13}, p^4 -1 \equiv_q a_{q,\gamma(q)}\} \in \mathcal U(\gamma), \quad \forall q \in \mathbb P_{\geq 7}.
\end{eqnarray}
Indeed, for any finite part of $\gamma$, there exists a prime $p \geq 13$ solving the problem by the previous lemma. We define an ultrafilter $\mc U(\gamma)$ by choice of a limit point of the net of those primes as the finite part increases.

Now, since FO-sentences can distinguish congruence conditions by \autoref{thm:congr}, we conclude that the resulting groups $\prod^{\rm alg}_{\mathcal U(\gamma)} \Alt(m)$ are pairwise non-elementarily equivalent. Indeed, by the observation above, the group $\prod^{\rm alg}_{\mathcal U(\gamma)} \Alt(m)$ satisfies the FO-sentences $\varphi(a_{q,\gamma(q)},q)$ and $\neg \varphi(a_{q,1-\gamma(q)},q)$ for all primes $q \geq 7.$
\end{proof}


The ultrafilters $\mc U(\gamma)$ will be crucial in the proof of the main theorem. We will denote by
\[
\mb U\coloneqq \{\mc U(\gamma)\mid \gamma\colon \mb P_{\geq 7}\to \{0,1\}\} \subset \beta \mathbb N \setminus \mathbb N
\]
the set of ultrafilters constructed in the previous theorem. 

\section{The main theorem}

\subsection{Isomorphisms of metric ultraproducts}

Recall that there is a natural metric on a metric ultraproduct of symmetric groups, arising as the ultralimit of the normalized Hamming metrics. The following proposition is folklore:

\begin{proposition} \label{prp:isometric}
Every isomorphism of universal sofic groups is automatically isometric.
\end{proposition}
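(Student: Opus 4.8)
The plan is to reconstruct the metric $d_{\mathcal U}$ purely algebraically from the group structure of ${\rm Sym}(\mathcal U)$, so that any group isomorphism must preserve it. The natural candidate is the word metric associated to a conjugation-invariant generating set, or more precisely a quantity built from the lattice of centralizers. Concretely, I would first recall the following standard facts about $G := {\rm Sym}(\mathcal U)$: (a) $G$ is algebraically simple (the normal subgroups of each ${\rm Sym}(n)$, hence of the ultraproduct, are controlled, and the metric-ultraproduct quotient kills exactly ${\rm N}(\mathcal U)$, leaving a simple group — this is an observation going back to Elek--Szab\'o); (b) for $g \in G$ with $d_{\mathcal U}(g,1) = t$, represented by $(\sigma_n)_n$ with $\lim_{n\to\mathcal U}|{\rm supp}(\sigma_n)|/n = t$, the ``size'' $t$ can be detected algebraically. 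The key point for (b) is that the support size is captured by commutation: $g$ commutes with a large subgroup, namely (the image of) ${\rm Sym}$ of the complement of the support, which has ``size'' $1-t$, and conversely double centralizers recover the support. So I would define, for $g\in G$,
\[
\mu(g) \coloneqq \inf\{\, t : g \text{ lies in a conjugate of the ``support-}\leq t\text{'' subgroup}\,\}
\]
and show $\mu(g) = d_{\mathcal U}(g,1)$, where the ``support-$\leq t$ subgroup'' is itself characterized algebraically via centralizers.

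The cleaner route, which I would actually follow, avoids reconstructing $t$ exactly and instead uses the bi-invariant metric's coarse structure: two bi-invariant metrics on the same group that are each Lipschitz-comparable to the ``commutator length / conjugation-invariant word length'' are automatically mutually Lipschitz, and since both $G_1$ and $G_2$ are metric ultraproducts with diameter $1$, one pins down the exact constant by a normalization argument. More precisely: first show there is a constant $K$ such that every element $g$ with $d_{\mathcal U}(g,1)\le \tfrac{1}{2}$ is a product of at most $K$ conjugates of any fixed element $h$ with $d_{\mathcal U}(h,1)\ge \tfrac{1}{4}$ (a uniform ``$\tfrac14$-generation'' statement for symmetric groups, true for some absolute $K$), and conversely any product of $K$ conjugates of $h$ has support $\le K\cdot d_{\mathcal U}(h,1)$. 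This sandwiches $d_{\mathcal U}$ between two multiples of the conjugation-invariant word norm $\|\cdot\|_h$ relative to $h$. An isomorphism $\Phi\colon {\rm Sym}(\mathcal U)\to{\rm Sym}(\mathcal V)$ sends the conjugacy-generation structure to itself, hence is bi-Lipschitz for the respective metrics; finally one upgrades bi-Lipschitz to isometric using that the set of values of $d_{\mathcal U}$ is all of $[0,1]$ together with the ``divisibility'' of elements — given $g$ with $d_{\mathcal U}(g,1) = t$ and any $s\le t$ there is $g'\le g$ (in the sense of disjoint-support factorization) with $d_{\mathcal U}(g',1)=s$, and this factorization is algebraic (orthogonality = double-commuting in the appropriate sense). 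A bi-Lipschitz bijection of $[0,1]$ compatible with this additive subdivision structure is the identity.

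The main obstacle is making the ``support size is algebraic'' step genuinely rigorous: in a metric ultraproduct, the naive subgroup ${\rm Sym}$ of a subset does not literally sit inside $G$, one only has almost-invariant subsets and their ultralimits, so the centralizer description must be phrased carefully (e.g.\ via ``$g$ and $h$ are $\epsilon$-disjoint'' detected by $d_{\mathcal U}([g,h],1)$ being small, and then passing to idempotent-like decompositions). Concretely I expect to need: for $g,h\in G$, $d_{\mathcal U}([g,h],1)$ is small iff the supports of (representatives of) $g$ and $h$ are nearly disjoint, and from a maximal nearly-disjoint family one reads off the support size of $g$ as a supremum of a quantity that only refers to the group operations and the already-reconstructed coarse metric. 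Once support size is recovered up to the coarse (bi-Lipschitz) ambiguity, the normalization/subdivision argument of the previous paragraph removes the ambiguity. I would present the coarse comparison first (short, using only uniform generation in ${\rm Sym}(n)$), then the subdivision argument, relegating the precise centralizer bookkeeping to a lemma.
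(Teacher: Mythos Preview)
Your plan diverges from the paper's argument and, as written, the upgrade from bi-Lipschitz to isometric has a real gap.

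The paper's proof is direct and avoids any bi-Lipschitz intermediate. It singles out the set $\Sigma = \{g \in {\rm Sym}(\mathcal U) : g \in {\rm cl}(g^k)\text{ for all }k \geq 1\}$, which consists exactly of elements representable by a single long cycle together with fixed points. For $g \in \Sigma$ one has the purely algebraic characterization $d_{\mathcal U}(1,g) < 1/n \iff ({\rm cl}(g))^n \neq {\rm Sym}(\mathcal U)$, since products of $n$ conjugates of a cycle of relative length $t$ fill out precisely the elements of support at most $nt$. Together with $\Sigma \cap B(m/n) = (\Sigma \cap B(1/n))^m$ this recovers $\Sigma \cap B(r)$ for every rational $r$ group-theoretically. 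A general $g$ is then handled by decomposing its centralizer according to cycle type and measuring the size of each factor via the largest element of $\Sigma$ it contains; summing gives $d_{\mathcal U}(1,g)$ exactly.

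Your route instead first obtains a bi-Lipschitz comparison via covering numbers (this part is sound) and then attempts to upgrade using divisibility under disjoint-support factorization. The gap is twofold. First, you assert that disjoint support is algebraic (``orthogonality = double-commuting in the appropriate sense'') without saying how; commuting alone is certainly not enough ($g$ and $g^2$ commute), and no concrete centralizer criterion is offered. Second, the subdivision argument tacitly assumes the isomorphism induces a \emph{single} function $f\colon[0,1]\to[0,1]$ on support sizes, i.e., that $d(g,1)=d(h,1)$ forces $d(\Phi(g),1)=d(\Phi(h),1)$; this is not automatic for elements of different cycle type, and you also need the normalization $f(1)=1$, which bi-Lipschitz alone does not give. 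The natural repair---restrict to elements whose conjugacy class is determined by support size, namely single cycles---leads straight back to the paper's $\Sigma$ and its covering-number characterization, at which point the bi-Lipschitz detour becomes unnecessary.
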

\begin{proof} We need to encode the metric using the group theoretic properties of ${\rm Sym}(\mathcal U)$. For $g \in {\rm Sym}(\mathcal U)$, we denote by ${\rm cl}(g)$ its conjugacy class. Consider $$\Sigma := \{ g \in {\rm Sym}(\mathcal U) \mid \forall k  \geq 1 \ g \in {\rm cl}(g^k) \}.$$

{A conjugacy class of an element $g$ in ${\rm Sym}(\mathcal U)$ is determined uniquely by a sequence of non-negative real numbers $(\lambda_k)_{k \geq 1}$ with $\sum_{k \geq 1} \lambda_k \leq 1$, where $\lambda_k$ is the limit of the proportional size of $\{1,\dots,n\}$ covered by $k$-cycles in the cycle decomposition of a representative of $g$. The number $\lambda_{\infty} := 1 - \sum_{k \geq 1} \lambda_k$ corresponds to the proportion of points that are moved by $g$ in cycles of length tending to infinity. The conjugacy class of an element $g$ consists of all elements that can be represented by sequences of permutations with the same asymptotic cycle structure as $g$. Without loss of generality, we can assume that the part of $g$ contributing to $\lambda_{\infty}$ is represented by one long cycle. Indeed, since the cycle length is growing in that part, we can merge all cycles of length tending to infinity into one long cycle and change the sequence of representatives only on a negligible part of $\{1,\dots,n\}$, hence without changing the conjugacy class in the metric ultraproduct.

Now, if $g$ has a positive proportion of $k$-cycles for some $k \geq 2$, then the conjugacy class of $g^k$ consists of all elements that can be represented by sequences of permutations with a positive proportion of fixed points. Hence, if $g$ has a positive proportion of $k$-cycles for some $k \geq 2$, then $g \not \in {\rm cl}(g^k)$. Conversely, if $g$ has only a negligible proportion of non-trivial cycles, then it is easy to see that $g \in {\rm cl}(g^k)$ for all $k \geq 1.$

One can now see that the set $\Sigma$ consists of all those elements in the ultraproduct that can be represented by permutations with just one non-trivial cycle and a number of fixed points, i.e. $\lambda_k=0$ for $k \geq 2$. Indeed, if $g$ has a positive proportion of $k$-cycles for some $k \geq 2$, then $g^k$ has a larger ratio of fixed points than $g$ and hence $g \not \in {\rm cl}(g^k)$. Conversely, if $g$ can be represented by a sequence of permutations with just one non-trivial cycle, then it is easy to see that $g \in {\rm cl}(g^k)$ for all $k \geq 1.$}

{Let $r \in \mathbb Q$ and consider the set $B(r):=\{g \in {\rm Sym}(\mathcal U) \mid d_{\mathcal U}(1,g) < r\},$ i.e., the open ball of radius $r$ around the identity element. We claim that
$$\Sigma \cap B(1/n) = \{g \in \Sigma \mid \Sigma \not \subseteq ({\rm cl}(g))^n \}.$$
Indeed, $\subseteq$ is clear from the triangle inequality. For the converse, let $g \in \Sigma$ be such that $\Sigma \not \subseteq ({\rm cl}(g))^n$. If $g \in \Sigma$ satisfies $g \not \in B(1/n)$, then $g$ can be represented by a sequence of permutations with just one non-trivial cycle of relative length at least $1/n$. Thus, ${\rm cl}(g))^n$ contains all elements that can be represented by a sequence of permutations with $n$ non-trivial cycles of relative length $1/n$. However, this implies that $\Sigma \subseteq ({\rm cl}(g))^n$ contradicting out assumption. Hence, $g \in B(1/n)$ as claimed. A similar argument shows that $$\Sigma \cap B(m/n) = \Sigma \cap (\Sigma \cap B(1/n))^m.$$} This shows that any isomorphism must preserve the sets $\Sigma \cap B(r)$ for $r \in \mathbb Q.$

Now, the centralizer of each element $g \in {\rm Sym}(\mc U)$ decomposes uniquely as a product
$C(g) = S_{\infty} \times S_1 \times \prod_{k \geq 2} (A_k \rtimes S_k)$, where each $S_k$, for $k \geq 1$, is a (potentially trivial) metric ultraproduct of symmetric groups and $A_k$ is the group of $(\mathbb Z/k \mathbb Z)$-valued functions (modulo measure zero) on the corresponding Loeb space. The factors $S_k$, for $k \geq 1$, in the decomposition corresponds to 
the product of all $k$-cycles in the decomposition of $g$, whereas $S_{\infty}$ corresponds to the part, where $g$ acts with larger and larger cycles. It is easy to see, that the size $\lambda_k \in [0,1]$ of the support of each of the subgroups $S_k$ is measured by the support of the largest element in $\Sigma$ that lies in the subgroup. In particular, by our reasoning above, the sizes are preserved by any isomorphism. Since 
$$d_{\mathcal U}(1,g) = \sum_{k \neq 1} \lambda_k=1-\lambda_1,$$ we conclude that the metric is preserved.
\end{proof}

One can compare the previous result with related results obtained by P\u{a}unescu, see \cite{MR3299505}.

\begin{definition} \label{def:equivalentultrafilters}
We say that two non-principal ultrafilters $\mathcal U, \mathcal V$ on $\mathbb N$ are equivalent if there exists a monotone map $\kappa \colon \mathbb N \to \mathbb N$, such that $\kappa_*(\mathcal U)=\mathcal V$ and $\lim_{n \to \mathcal U} \kappa(n)/n=1.$
\end{definition}

It is elementary to check that \emph{equivalence} of ultrafilters (in this sense) defines an equivalence relation on the set of ultrafilters. It is a well-known observation that universal sofic groups with respect to equivalent ultrafilters are isomorphic. Let's spell out the details:

\begin{proposition} \label{thm:equl}
Let $\mathcal U, \mathcal V$ be equivalent non-principal ultrafilters on $\mathbb N$. Then, the two metric ultraproducts
$\prod_{\mathcal U}^{\rm met} ({\rm Sym}(n),d_n)$ and $\prod_{\mathcal V}^{\rm met} ({\rm Sym}(n),d_n)$
are {canonically} isomorphic.
\end{proposition}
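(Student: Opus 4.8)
The plan is to construct an explicit isomorphism directly from the data of the equivalence. By hypothesis there is a monotone $\kappa \colon \mathbb N \to \mathbb N$ with $\kappa_*(\mathcal U) = \mathcal V$ and $\lim_{n \to \mathcal U} \kappa(n)/n = 1$. First I would observe that, since $\kappa$ is monotone, we may assume $\kappa(n) \leq n$ for $n$ in a set belonging to $\mathcal U$ (otherwise pass to the monotone map $n \mapsto \min(\kappa(n),n)$, which still pushes $\mathcal U$ to an ultrafilter equivalent to $\mathcal V$, using that $\kappa(n)/n \to 1$; a symmetric argument handles the reverse). Having done this, for each such $n$ fix an embedding ${\rm Sym}(\kappa(n)) \hookrightarrow {\rm Sym}(n)$ as the stabilizer of the last $n - \kappa(n)$ points, i.e. $\tau \mapsto \tau \oplus 1_{n - \kappa(n)}$. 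The key point is that this embedding is almost isometric: for $\sigma, \tau \in {\rm Sym}(\kappa(n))$ one has
\[
d_n(\sigma \oplus 1, \tau \oplus 1) = \frac{\kappa(n)}{n}\, d_{\kappa(n)}(\sigma,\tau),
\]
and $\kappa(n)/n \to 1$ along $\mathcal U$, so distances are preserved in the ultralimit.

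Next I would assemble these maps into a homomorphism $\Phi \colon \prod^{\rm met}_{\mathcal V}({\rm Sym}(m),d_m) \to \prod^{\rm met}_{\mathcal U}({\rm Sym}(n),d_n)$. Given a representative sequence $(\tau_m)_m$ of an element of the left-hand side, I send it to the class of $(\tau_{\kappa(n)} \oplus 1_{n-\kappa(n)})_n$. One must check this is well-defined: the map $n \mapsto \kappa(n)$ transports the index set, and because $\kappa_*(\mathcal U) = \mathcal V$, the ultralimit $\lim_{n \to \mathcal U} d_n(\tau_{\kappa(n)} \oplus 1, \tau'_{\kappa(n)} \oplus 1) = \lim_{n \to \mathcal U} (\kappa(n)/n) d_{\kappa(n)}(\tau_{\kappa(n)}, \tau'_{\kappa(n)})$ equals $\lim_{m \to \mathcal V} d_m(\tau_m, \tau'_m)$, so $N(\mathcal V)$ is sent into $N(\mathcal U)$ and $\Phi$ is a well-defined injective isometric group homomorphism.

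It remains to show $\Phi$ is surjective, and this is the step I expect to require the most care. Given an arbitrary element of $\prod^{\rm met}_{\mathcal U}({\rm Sym}(n),d_n)$ represented by $(\sigma_n)_n$, I would modify each $\sigma_n$ on the $n - \kappa(n)$ "extra" points: define $\tilde\sigma_n \in {\rm Sym}(n)$ to agree with $\sigma_n$ on the set of points $i \leq \kappa(n)$ with $\sigma_n(i) \leq \kappa(n)$, and to fix all remaining points. Since $\kappa(n)/n \to 1$ along $\mathcal U$, the number of points where $\tilde\sigma_n$ and $\sigma_n$ differ is at most $2(n - \kappa(n)) = o(n)$ along $\mathcal U$, so $(\tilde\sigma_n)_n$ and $(\sigma_n)_n$ represent the same element of the metric ultraproduct. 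But $\tilde\sigma_n$ now preserves $\{1,\dots,\kappa(n)\}$ setwise and fixes the complement, hence lies in the image of ${\rm Sym}(\kappa(n)) \hookrightarrow {\rm Sym}(n)$; write $\tilde\sigma_n = \rho_n \oplus 1$ with $\rho_n \in {\rm Sym}(\kappa(n))$. Because $\kappa_*(\mathcal U) = \mathcal V$, the sequence $(\rho_n)_n$ indexed via $\kappa$ determines a sequence over the index set of $\mathcal V$ whose image under $\Phi$ is precisely the class of $(\tilde\sigma_n)_n = (\sigma_n)_n$. (Concretely: pick any $(\eta_m)_m$ with $\eta_{\kappa(n)} = \rho_n$ for $n$ in a set in $\mathcal U$ on which $\kappa$ behaves well; the freedom off this set is immaterial since $\mathcal V = \kappa_*(\mathcal U)$.) This shows $\Phi$ is onto, completing the proof that it is an isomorphism.
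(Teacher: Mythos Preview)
The paper does not actually supply a proof of this proposition; it is introduced as ``a well-known observation'' and stated without argument. Your construction is the standard one and is essentially correct, but two points deserve tightening.

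The reduction to $\kappa(n)\le n$ along $\mathcal U$, as you phrase it, is slightly circular: replacing $\kappa$ by $\min(\kappa,\mathrm{id})$ changes the pushforward, so you would be invoking the very statement being proved for a different pair of ultrafilters. A cleaner route is to note that, $\mathcal U$ being an ultrafilter, either $\{n:\kappa(n)\le n\}\in\mathcal U$ or its complement is; in the latter case run the identical construction with the roles of $\mathcal U,\mathcal V$ swapped, using the monotone $\lambda$ with $\lambda_*(\mathcal V)=\mathcal U$ and $\lim_{m\to\mathcal V}\lambda(m)/m=1$ supplied by symmetry of the equivalence relation.

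More substantively, the surjectivity step needs $\kappa$ to be injective on some $A\in\mathcal U$, so that the prescription ``$\eta_{\kappa(n)}=\rho_n$ for $n\in A$'' is unambiguous; your phrase ``on which $\kappa$ behaves well'' hides exactly this requirement, and without it different $n,n'$ in the same fibre of $\kappa$ could demand incompatible values of $\eta_m$. Injectivity on a $\mathcal U$-large set does not follow from $\kappa_*(\mathcal U)=\mathcal V$ alone, but it does follow here: with $\lambda$ as above one has $(\lambda\circ\kappa)_*(\mathcal U)=\mathcal U$, and the standard Rudin--Keisler fact that $f_*(\mathcal U)=\mathcal U$ forces $f=\mathrm{id}$ on a set in $\mathcal U$ then gives $A\in\mathcal U$ with $\lambda\circ\kappa|_A=\mathrm{id}$, hence $\kappa|_A$ injective. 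With this in hand your definition of $(\eta_m)_m$ is legitimate and $\Phi$ is onto.
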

\begin{proof} {Let $\kappa \colon \mathbb N \to \mathbb N$ be a monotone map such that $\kappa_*(\mathcal U)=\mathcal V$ and
\[\lim_{n \to \mathcal U} \kappa(n)/n=1.\]
Let $\kappa^+(n)=\max \{ \kappa(n),n\}$. Note that $\kappa^+(n)/n \to 1$ along $\mathcal U$ and $\kappa^+(n)/\kappa(n) \to 1$ along $\mathcal U$. Hence, we can replace $\kappa$ by $\kappa^+$ and assume that $\kappa(n) \geq n$ for all $n \in \mathbb N.$
We define an isomorphism $\pi \colon \prod_{\mathcal U}^{\rm met} ({\rm Sym}(n),d_n) \to \prod_{\mathcal V}^{\rm met} ({\rm Sym}(n),d_n)$ by sending $[\sigma_n]_n$ to $[\sigma_n \oplus 1_{\kappa^+(n) - n}]_n$. This map is asymptotically isometric and has asymptotically dense image. Hence, it defines an isomorphism of metric ultraproducts.}
\end{proof}

Consider the map $f(n)=n!\!/2$. In the proof of the main theorem in the next section, ultrafilters of the form $f_*(\mathcal U)$ will play an important role. We will rely on the following basic observation:

\begin{lemma}\label{lem:factorial-non-equivalent}
Let $\mc U\neq \mc V\in\beta \mathbb N$ be two ultrafilters. Then, $f_*(\mathcal U)$ and $f_*(\mathcal V)$ are not equivalent.
\end{lemma}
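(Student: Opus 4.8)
The plan is to show that the two push-forward ultrafilters $f_*(\mathcal U)$ and $f_*(\mathcal V)$ cannot be equivalent by arguing that any monotone map $\kappa$ witnessing equivalence would be forced to collapse in a way incompatible with $\mathcal U\neq\mathcal V$. The key point is that $f(n)=n!/2$ grows so fast that consecutive values are multiplicatively far apart: $f(n+1)/f(n)=n+1\to\infty$. Hence the set $f(\mathbb N)=\{1!/2,2!/2,3!/2,\dots\}$ is extremely sparse, and in particular for large $N$ there is at most one element of $f(\mathbb N)$ in the interval $[(1-\varepsilon)N,N]$ — indeed once $n\geq 1/\varepsilon$, the gap below $f(n+1)$ down to $f(n)$ already exceeds $\varepsilon f(n+1)$.

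The argument then runs as follows. Suppose towards a contradiction that $\kappa\colon\mathbb N\to\mathbb N$ is monotone with $\kappa_*(f_*\mathcal U)=f_*\mathcal V$ and $\lim_{m\to f_*(\mathcal U)}\kappa(m)/m=1$. The set $A:=f(\mathbb N)$ lies in $f_*(\mathcal U)$, so $\kappa(A)$ — and in fact $\kappa$ restricted along $f_*(\mathcal U)$ — lands in a set belonging to $f_*(\mathcal V)$, which we may also intersect with $A$; thus along $f_*(\mathcal U)$ we have $\kappa(f(n))\in A$ and $\kappa(f(n))/f(n)\to 1$. Now I combine this with the sparsity observation: for every $\varepsilon>0$, the set of $m\in A$ such that the \emph{only} element of $A$ in $[(1-\varepsilon)m, m]$ is $m$ itself contains all sufficiently large elements of $A$, hence belongs to $f_*(\mathcal U)$. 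On that set, $\kappa(f(n))\in A$ together with $(1-\varepsilon)f(n)\leq \kappa(f(n))\leq f(n)$ forces $\kappa(f(n))=f(n)$ once $\varepsilon$ is small and $n$ is large. Taking, say, $\varepsilon=1/2$ already yields a set $B\in f_*(\mathcal U)$ on which $\kappa$ is the identity. But then $f_*(\mathcal V)=\kappa_*(f_*(\mathcal U))$ agrees with $f_*(\mathcal U)$ on $B\in f_*(\mathcal U)$, forcing $f_*(\mathcal U)=f_*(\mathcal V)$. Since $f$ is injective, $f_*$ is injective on $\beta\mathbb N$ (the sets $f^{-1}$ of members of $f_*(\mathcal W)$ recover $\mathcal W$), so $\mathcal U=\mathcal V$, contradicting the hypothesis.

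A little care is needed for the bookkeeping with ultralimits: the condition $\lim_{m\to f_*(\mathcal U)}\kappa(m)/m=1$ means that for each $\varepsilon>0$ the set $\{m\mid |\kappa(m)/m-1|<\varepsilon\}\in f_*(\mathcal U)$, and "$\kappa$ is the identity on a set in $f_*(\mathcal U)$" is obtained by intersecting three sets in $f_*(\mathcal U)$: this last one (with $\varepsilon=1/2$), the set $A$, and the sparsity set $\{m\in A\mid A\cap[(1/2)m,m]=\{m\}\}$. On the intersection, $\kappa(m)\in A\cap[(1/2)m, m]=\{m\}$, so $\kappa(m)=m$. Pushing forward, $\kappa_*(f_*\mathcal U)$ and $f_*\mathcal U$ contain a common set on which they are defined by the identity, hence are equal.

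I do not expect a genuine obstacle here; the statement is elementary. The only thing to get right is the quantitative sparsity of $f(\mathbb N)$ and the standard fact that $f_*$ is injective on $\beta\mathbb N$ for injective $f$ — both are routine. The slightly delicate conceptual step is realizing that the \emph{monotonicity} of $\kappa$ is not actually what drives the argument; it is the combination of the "ratio tends to $1$" constraint with the multiplicative sparseness of the factorial sequence, which leaves no room for $\kappa$ to move points of $f(\mathbb N)$ at all along the relevant ultrafilter.
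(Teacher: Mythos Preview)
Your argument is correct and follows essentially the same route as the paper's proof: both exploit the multiplicative sparseness of $f(\mathbb N)$ (the paper phrases this as $f(n)/f(m)\notin(1/2,2)$ for $n\neq m$) to force $\kappa$ to be the identity along $f_*(\mathcal U)$, then conclude $f_*(\mathcal U)=f_*(\mathcal V)$ and hence $\mathcal U=\mathcal V$ by injectivity of $f$. One small slip: in your final bookkeeping paragraph you pass from $|\kappa(m)/m-1|<1/2$ to $\kappa(m)\in[(1/2)m,m]$, which drops the upper half of the interval; you should use the two-sided set $\{m\in A\mid A\cap[(1/2)m,(3/2)m]=\{m\}\}$ (still cofinite in $A$, since $f(n+1)/f(n)=n+1\geq 2$ for $n\geq 1$), after which the conclusion $\kappa(m)=m$ follows as you wrote.
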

\begin{proof}
Note that $f(n)/f(m) \not \in (1/2,2)$ if $n \neq m$. Suppose that $f_*(\mc U)$ is equivalent to $f_*(\mc V)$. Now, this implies that $\kappa(n)= n$ along $f_*(\mc U)$. In particular, $f_*(\mc U) = f_*(\mc V)$ and hence $\mc U=\mc V$.
\end{proof}

{
   \subsection{The left-right action}

We collect some results about the left-right action of a group on itself by left and right multiplication, which will be crucial in the proof of the main theorem. 

Let $G$ be a group. The left-right action of $G$ on itself is the action of $G \times G$ on $G$ given by $(g,h)\cdot x = gxh^{-1}$ for $g,h,x \in G$. The centralizer of the left copy of $G$ in this action is the right copy of $G$ and vice versa (indeed, the action of the centralizer is uniquely determined by the image of $1$). In particular, the double centralizer of the left copy of $G$ is again the left copy of $G$. Moreover, the two copies are interchanged by the involution $\iota\colon G\to G$, $\iota(x)=x^{-1}$. Altogether we have a homomorphism $(\mb Z/2\mb Z)\ltimes (G\times G)\to {\rm Sym}(G)$ explicitly given by
\[
(g,h,\alpha)\cdot x = (gxh^{-1})^{(-1)^\alpha}.
\]

\begin{lemma}\label{lem:left-right}
Let $G$ be a finite group and $G\lacts Y$ be a transitive action. If the centralizer of $G$ in $\Sym(Y)$ is isomorphic to $G$, then the action is isomorphic to the left action of $G$ on itself.
\end{lemma}
\begin{proof}
Indeed, if $(G\lacts Y)\cong (G\lacts G/H)$ and $z\in\Sym(G/H)$ commutes with the action of $G$, then $z$ is uniquely determined by $z(H)\in G/H$, so that the size of the centralizer cannot exceed $|G/H|$. So, if the centralizer is isomorphic to $G$, then $H$ must be trivial and the action is isomorphic to the left action of $G$ on itself.
\end{proof}
}

\subsection{Proof of the main theorem}

Our aim is to show that the metric ultraproducts $\prod_{f_*(\mathcal U)}^{\rm met} ({\rm Sym}(n),d_n)$ are pairwise non-isomorphic when $\mathcal U$ ranges among the ultrafilters constructed in \autoref{thm:ultra}. Then, \autoref{thm:mainintro} is an immediate consequence. In fact, we show a slightly stronger result:

\begin{theorem}\label{thm:main}
Let $\mathcal U\in \mb U$ be an ultrafilter and $\mathcal V$ be any other ultrafilter. Assume that $\prod_{f_*(\mc U)}^{\rm met} ({\rm Sym}(n),d_n)$ and $\prod_{\mathcal V}^{\rm met} ({\rm Sym}(n),d_n)$ are isomorphic. Then there exists a unique ultrafilter $\mathcal U'\in\beta \mathbb N$ such that $\mathcal V$ is equivalent to $f_*(\mathcal U').$
Moreover, the ultraproducts $\prod^{\rm alg}_{\mathcal U} {\rm Alt}(n)$ and $\prod^{\rm alg}_{\mathcal U'} {\rm Alt}(n)$ are isomorphic.

In particular, the metric ultraproducts $\prod_{f_*(\mathcal U)}^{\rm met} ({\rm Sym}(n),d_n)$ with $\mc U\in\mb U$ are pairwise non-isomorphic.
\end{theorem}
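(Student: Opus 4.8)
The plan is to show that the isomorphism type of the metric ultraproduct $\prod_{f_*(\mathcal U)}^{\rm met}({\rm Sym}(n),d_n)$ remembers enough about $\mathcal U$ to reconstruct the algebraic ultraproduct $\prod^{\rm alg}_{\mathcal U}{\rm Alt}(n)$ up to isomorphism, and then invoke \autoref{thm:ultra} to separate the $\mathcal U\in\mathbf U$. The starting point is the left-right action of ${\rm Sym}(m)\times{\rm Sym}(m)$ on the set ${\rm Sym}(m)$ of size $m!$ (so on $2f(m) = m!$ points, or one adapts $f$ accordingly): pick a generating set $S$ for the Bartholdi--Kassabov Kazhdan group $\Gamma$ of \autoref{thm:bartkass}, and for each prime $p\geq 13$ fix a surjection $\Gamma\twoheadrightarrow{\rm Alt}(p^4-1)$. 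Composing with the (left) regular-type representation of ${\rm Alt}(p^4-1)$ on itself gives a genuine homomorphism $\sigma_p\colon\Gamma\to{\rm Sym}((p^4-1)!)$, hence a sofic approximation $\sigma=[\sigma_p]_p\colon\Gamma\to\prod_{f_*(\mathcal U)}^{\rm met}({\rm Sym}(n),d_n)$ for $\mathcal U$ supported on $\{p^4-1\mid p\geq13\}$. Because each $\sigma_p$ factors through a finite quotient and the image generates a transitive action on a single orbit, the Schreier graphs are (up to the finitely-many-points issue) Cayley graphs of ${\rm Alt}(p^4-1)$ with the image generating set, which by property (T) of $\Gamma$ form a $c$-expander sequence with $c$ depending only on $\Gamma$; thus \autoref{thm:kun} and \autoref{thm:kun-thom} apply.

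Next I would identify the centralizer. By \autoref{cor:centralizer-is-an-algebraic-ultraproduct}, the centralizer $C(\sigma(\Gamma))$ inside $\prod_{f_*(\mathcal U)}^{\rm met}$ is $1$-discrete and isomorphic to $\prod_{\mathcal U} G_p$, where $G_p$ is the $\varepsilon$-centralizer of $\sigma_p(S)$. Here the key structural input is that the $\varepsilon$-centralizer of a \emph{genuine} sofic approximation coming from the left regular action of ${\rm Alt}(p^4-1)$ is, via \autoref{thm:kun-thom} together with Becker--Chapman stability (\autoref{thm:becker-chapman}), essentially the right-multiplication copy of ${\rm Alt}(p^4-1)$: the cluster group $G_p$ supports a uniform $C\varepsilon$-homomorphism into ${\rm Sym}((p^4-1)!)$, which \autoref{thm:becker-chapman} straightens to an honest homomorphism $C\varepsilon$-close to it, and an elementary rigidity argument (an almost-automorphism of a Cayley graph of a simple group that is close to a genuine one must be a translation) forces $G_p\cong{\rm Alt}(p^4-1)$ acting by right multiplication. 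Therefore $C(\sigma(\Gamma))\cong\prod_{\mathcal U}^{\rm alg}{\rm Alt}(p^4-1)$. Moreover the double centralizer $C(C(\sigma(\Gamma)))$ is then the left-multiplication copy, also $1$-discrete, so \autoref{lem:discrete-centralizers-implies-expander} shows that this configuration is detected purely metrically.

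Now suppose $\Phi\colon\prod_{f_*(\mathcal U)}^{\rm met}\xrightarrow{\sim}\prod_{\mathcal V}^{\rm met}({\rm Sym}(n),d_n)$ is an isomorphism; by \autoref{prp:isometric} it is isometric. I would transport the finite set $\sigma(S)$ through $\Phi$ to obtain a sofic approximation of $\Gamma$ inside $\prod_{\mathcal V}^{\rm met}$ whose centralizer and double centralizer are $1$-discrete, hence by \autoref{lem:discrete-centralizers-implies-expander} an approximation by expanders; applying \autoref{cor:centralizer-is-an-algebraic-ultraproduct} on the $\mathcal V$ side realizes the (isometrically transported) centralizer as an algebraic ultraproduct $\prod_{\mathcal V} H_n$ of finite groups, and running the same Becker--Chapman rigidity in reverse shows each relevant $H_n$ is an alternating group ${\rm Alt}(k_n)$ with $k_n!\sim n$ along $\mathcal V$ (up to the factor-of-two bookkeeping), which pins down $\mathcal V$ as equivalent to $f_*(\mathcal U')$ for the pushforward $\mathcal U'$ of $\mathcal V$ under the inverse of $n\mapsto k_n$; uniqueness of $\mathcal U'$ is \autoref{lem:factorial-non-equivalent}. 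Since the transported and original centralizers are abstractly isomorphic, $\prod_{\mathcal U}^{\rm alg}{\rm Alt}(n)\cong\prod_{\mathcal U'}^{\rm alg}{\rm Alt}(n)$. Finally, if $\mathcal U,\mathcal U''\in\mathbf U$ gave isomorphic metric ultraproducts, the above yields $\prod_{\mathcal U}^{\rm alg}{\rm Alt}(n)\cong\prod_{\mathcal U''}^{\rm alg}{\rm Alt}(n)$, contradicting \autoref{thm:ultra} unless $\mathcal U=\mathcal U''$; hence the $\prod_{f_*(\mathcal U)}^{\rm met}({\rm Sym}(n),d_n)$, $\mathcal U\in\mathbf U$, are pairwise non-isomorphic.

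I expect the main obstacle to be the rigidity step identifying the $\varepsilon$-centralizer $G_p$ precisely as the right-regular copy of ${\rm Alt}(p^4-1)$: one must control not just the isomorphism type but the \emph{action}, ruling out that clusters of $\varepsilon$-automorphisms are built from automorphisms of ${\rm Alt}(p^4-1)$ other than translations or from the non-expander part, and one must make all constants ($\varepsilon$, the expansion constant $c$, the Becker--Chapman $\delta$, the cluster radius $O_c(1/n)$) fit together uniformly in $p$ — this is exactly where \autoref{thm:bartkass} (a \emph{single} Kazhdan group surjecting onto all the ${\rm Alt}(p^4-1)$) is essential, so that \autoref{thm:kun-thom} is applied with one fixed $(F,\delta)$.
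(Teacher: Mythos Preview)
Your overall architecture matches the paper's, but the step you flag as ``Becker--Chapman rigidity in reverse'' hides two genuine gaps that the paper fills with ingredients you do not mention.

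First, on the $\mathcal V$ side you obtain finite groups $H_n$ as $\varepsilon$-centralizers and then assert that each $H_n$ is an alternating group $\Alt(k_n)$. Becker--Chapman does not give this: it straightens a uniform almost-homomorphism \emph{from} a given finite group, but says nothing about what that group is. The paper identifies $H_n$ by transporting the first-order sentences of \autoref{thm:felgner} and \autoref{prp:alternating}: since the centralizer on the $\mathcal U$ side is $\prod^{\rm alg}_{\mathcal U}\Alt(p^4-1)$, it satisfies these sentences, hence so does the isomorphic centralizer $\prod^{\rm alg}_{\mathcal V}H_n$, and by \L o\'s each $H_n$ (along $\mathcal V$) is a finite simple alternating group. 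You never invoke these sentences, and without them there is no mechanism to force $H_n\cong\Alt(k_n)$.

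Second, your deduction that $k_n!\sim n$ is where the paper uses the full group $\Lambda=(\mathbb Z/2\mathbb Z)\ltimes(\Gamma\times\Gamma)$, which you have replaced by a single copy of $\Gamma$. The paper needs two commuting sofic approximations of $\Gamma$ so that \autoref{cor:centralizer-is-an-algebraic-ultraproduct} applies to \emph{both} sides, giving $G_n\cong\Alt(m_n)$ and $G'_n\cong\Alt(m'_n)$; the $\mathbb Z/2\mathbb Z$ flip then forces $m_n=m'_n$ along $\mathcal V$. Only then does Becker--Chapman, applied to the whole $(\mathbb Z/2\mathbb Z)\ltimes(\Alt(m_n)\times\Alt(m_n))$, yield a genuine homomorphism into $\Sym(k)$ whose restriction to one factor is transitive on a set $Y_n$ of size $\geq(1-O(\delta))n$ with centralizer containing $\Alt(m_n)$; the elementary bound $|\text{centralizer}|\leq|Y_n|$ for a transitive action then gives $|Y_n|=m_n!/2$. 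With only one copy of $\Gamma$ you cannot run \autoref{cor:centralizer-is-an-algebraic-ultraproduct} on the double centralizer (it is not a priori the centralizer of a Kazhdan group), you have no reason for the two sides to have matching degrees, and the size comparison collapses. Your closing paragraph locates the difficulty on the $\mathcal U$ side, but there the homomorphism is genuine and the $\varepsilon$-centralizer is literally the right-regular copy; the real work is on the $\mathcal V$ side, and it needs both the FO input and the $\Lambda$-structure.
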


The proof of \autoref{thm:main} will be carried {out} in several steps combining the results prepared in the above sections.

\begin{proof}
Using the result of Bartholdi--Kassabov (\autoref{thm:bartkass}), we fix a Kazhdan group $\Gamma$ {generated by a finite set $S$} together with surjective homomorphisms $r_p\colon \Gamma\to \Alt(p^4-1)$ for every prime $p \geq 13$. We set $\Lambda\coloneqq (\mb Z/2\mb Z)\ltimes (\Gamma\times \Gamma)$, where $\mb Z/2\mb Z$ acts by permuting the factors. We will call the corresponding subgroups the \emph{left} resp.\ \emph{right} copy of $\Gamma$ in $\Lambda$ and denote them by $\Gamma_\ell$ resp.\ $\Gamma_r$. Note that by combining the left-right action of $\Gamma$ on itself with the quotient map $r_p$ we get a sequence of homomorphisms {$q_p\colon \Lambda \to \Sym(\Alt(p^4-1))\cong \Sym((p^4-1)!\!/2)$}, where $\mathbb Z/2 \mathbb Z$ acts by inversion. For every 
$\mc U\in\mb U$, they combine to a homomorphism $\pi_{\mc U}\colon \Lambda \to \prod_{f_*(\mathcal U)}^{\rm met} ({\rm Sym}(n),d_n).$ We make the following claims:
\begin{enumerate}
\item The centralizer $C(\pi_{\mc U}(\Gamma_l))$ and the double centralizer $C(C(\pi_{\mc U}(\Gamma_l)))=C(\pi_{\mc U}(\Gamma_r))$ are $1$-discrete, isomorphic to $\prod^{\rm alg}_{f_*(\mathcal U)}{\rm Alt}(n)$, and interchanged by $\mathbb Z/2 \mathbb Z$. Indeed, any sequence of finite quotients of a {Kazhdan} group yields expander graphs by \cite{margulis-expanders}, {and now \autoref{cor:centralizer-is-an-algebraic-ultraproduct} applies to $\pi_{\mc U}(\Gamma_\ell)$ and $\pi_{\mc U}(\Gamma_r)$, yielding $1$-discreteness of their centralizers and the fact that they are algebraic ultraproducts of finite groups both containing $\prod^{\rm alg}_{f_*(\mathcal U)}{\rm Alt}(n)$ acting from the right resp. left. Therefore along $f_*(\mathcal U)$ we are in the situation of \autoref{lem:left-right}, and using \L{}o\'s' theorem, we deduce that both centralizers are isomorphic to $\prod^{\rm alg}_{f_*(\mathcal U)}{\rm Alt}(n)$, and hence the double centralizer of $\pi_{\mc U}(\Gamma_l)$ coincides with $\pi_{\mc U}(\Gamma_l)$. Finally, since the action of $\mathbb Z/2 \mathbb Z$ interchanges the left and right copy of $\Gamma$, it also interchanges their images under $\pi_{\mc U}$ and hence their centralizers.}
\item The centralizer $C(\pi_{\mc U}(\Gamma_\ell))$ satisfies the FO-sentences described in \autoref{thm:felgner} and \autoref{prp:alternating}. {This follows from (1) and \L{o}\'s' theorem.}
\item The centralizers $\Cnt(\pi_{\mc U}(\Gamma_\ell))$ and $\Cnt(\pi_{\mc U}(\Gamma_r))$ commute. Indeed, this is obvious from the computation of the centralizers in (1).
\end{enumerate}

{Now, let's assume that $\prod_{f_*(\mathcal U)}^{\rm met} ({\rm Sym}(n),d_n)$ and $\prod_{\mathcal V}^{\rm met} ({\rm Sym}(n),d_n)$ are isomorphic via an isomorphism
\[
\theta \colon \prod{\vphantom{\prod}}^{\met}_{f_*(\mathcal U)}({\rm Sym}(n),d_n) \to \prod{\vphantom{\prod}}^{\met}_{\mathcal V} ({\rm Sym}(n),d_n).
\]
Therefore, we may consider $\sigma := \theta \circ \pi_{\mc U} \colon \Lambda \to \prod_{\mathcal V}^{\rm met} ({\rm Sym}(n),d_n)$ and study its properties. Notice that $\theta$ is automatically isometric by \autoref{prp:isometric}, and hence properties (1), (2) and (3) are transferred to $\sigma$ via the isomorphism $\theta$.

We define $\Gamma_{\mc V} \coloneqq \sigma(\Gamma_l) \le \prod_{\mathcal V}^{\rm met} ({\rm Sym}(n),d_n)$ and observe that $\Gamma_{\mc V}$, being by construction an infinite quotient of $\Gamma$, is an infinite Kazhdan group; abusing notation slightly, we still denote its generating set by $S$. Now, $\sigma$ can naturally be considered to be a sofic approximation {$\sigma_n\colon \Gamma_{\mc V} \to \Sym(n)$} of this group along $\mc V$. The aim of the proof is to recover the finitary situation described above completely.}

First we observe that in view of \autoref{lem:discrete-centralizers-implies-expander} applied to $\Gamma_{\mc V}$, we can assume that our sofic approximation of $\Gamma_{\mc V}$ yields a $c$-expander sequence {of $S$-Schreier graphs}, where $c > 0$ depends only on the Kazhdan constant for $\Gamma$. Therefore, taking a positive and small enough $\delta < c/10^4$, we can now apply \autoref{cor:centralizer-is-an-algebraic-ultraproduct} {(with $\varepsilon\coloneqq \delta$)} to $\Gamma_{\mc V}$ {deducing that the centralizer of $\Gamma_{\mc V}$ is an algebraic ultraproduct of finite groups $G_n$}. Since the finite groups $G_n$ satisfy the FO-sentences from (2), we deduce that $G_n\cong \Alt(m_n)$ for some $m_n\in\mb N$. Similarly, the centralizer of $\sigma(\Gamma_r)$ follows to be an algebraic ultraproduct of finite groups $G'_n\cong \Alt(m'_n)$. 

{We now use property (1) to show $m_n=m'_n$ along $\mc V$ and how to obtain a uniform $\delta'$-homomorphism from $(\mb Z/2\mb Z)\ltimes (\Alt(m_n)\times \Alt(m_n))$ to $\Sym(n)$ for some $\delta' \sim O(\delta)$ along the ultrafilter $\mc V$. Indeed, by \autoref{cor:centralizer-is-an-algebraic-ultraproduct} the action of $G_n$ is given by picking representatives of the clusters in the $\delta$-centralizer of $S \subset \Gamma_{\mc V} = \pi(\Gamma_l)$, giving a uniform $(4\delta/c)$-homomorphism from $G_n=\Alt(m_n)$ to $\Sym(n)$. Indeed, the product of two $\delta$-automorphisms is a $2 \delta$-automorphism and thus $4\delta/c$-close to the representative of the relevant cluster, see \autoref{thm:kun-thom}.
Similarly, taking representatives of the clusters in the $\delta$-centralizer of $S \subset \sigma(\Gamma_r)$, we get a uniform $4\delta/c$-homomorphism from $G_n'=\Alt(m'_n)$ to $\Sym(n)$.

The involution $\sigma(\tau)$, where $\tau$ is the generator of $\mathbb Z/2\mathbb Z$, interchanges the left and right copy of $S$ and hence also their centralizers. We can lift $\tau_n$ to a sequence of involutions $\tau_n \in \Sym(n)$, and for a $\delta$-automorphism $g \in G_n$, the permutation $\tau_n g \tau_n$ yields a well-defined element in $G'_n$. This assignment defines an isomorphism between $G_n$ and $G'_n$ along $\mc V$; in particular, we conclude that $m_n=m'_n$ along $\mc V$.}

{We claim that $G_n$ and $G_n'$ commute up to $9 \delta/c$ uniformly along $\mc V$. Indeed, consider a sequence $\alpha_n \in G_n$ and $\beta_n \in G_n'$ (where we identify $\alpha_n$ and $\beta_n$ with representatives of the clusters for simplicity), such that $d_n(\alpha_n \beta_n, \beta_n\alpha_n)$ is maximized. Then, as the sofic approximation improves along $\mc V$, using \autoref{thm:kun-thom}, we can improve our representatives $\alpha_n$ and $\beta_n$ to elements $\tilde \alpha_n$ and $\tilde \beta_n$ that asymptotically commute with the generators of $\Gamma_{\mc V} = \pi(\Gamma_l)$ and $\pi(\Gamma_r)$, respectively. By definition of the $\delta$-centralizer, we have $d_n(\alpha_n,\tilde \alpha_n) < 2\delta/c$ and $d_n(\beta_n,\tilde \beta_n) < 2\delta/c$ along $\mc V$. Now, $\tilde \alpha = [(\tilde \alpha_n)_n] \in \Cnt(\pi(\Gamma_l))$ and $\tilde \beta = [(\tilde \beta_n)_n] \in \Cnt(\pi(\Gamma_r))$. By property (3), the groups $\Cnt(\pi(\Gamma_l))$ and $\Cnt(\pi(\Gamma_r))$ commute, and hence
$$\limsup_{n \to \mc V} d_n(\alpha_n \beta_n, \beta_n\alpha_n) \leq 8 \delta/c + \limsup_{n \to \mc V}d_n(\tilde \alpha_n \tilde \beta_n, \tilde \beta_n \tilde \alpha_n)=  8 \delta/c.$$
Hence, $d_n([\alpha, \beta]) < 9 \delta/c$ uniformly along $\mc V$ for $\alpha \in G_n,\beta \in G'_n$, as claimed.} 

{Since the action of $\mathbb Z/2\mathbb Z$ interchanges the left and right copy and they almost commute uniformly by the argument above, we can combine those two uniform $4\delta/c$-homomorphisms to a uniform $\delta'$-homomorphism} 
\[
\rho_n\colon (\mb Z/2\mb Z)\ltimes (\Alt(m_n)\times \Alt(m_n)) \to \Sym(n)
\]
for $\delta' = 9 \delta/c + 4\delta/c + 4\delta/c = 17 \delta/c$.

By \autoref{thm:becker-chapman}, it is uniformly $2039\delta'$-close to a homomorphism $\pi_n\colon (\mb Z/2\mb Z)\ltimes (\Alt(m_n)\times \Alt(m_n)) \to \Sym(k)$ with $k\in[n,(1+2039\delta')n]$.

We claim that $\left|n - \frac{m_n!}{2}\right|\leq C\delta' n$ for some universal $C > 0$. Indeed, since $\pi_n(s_n)$ are $2039\delta'$-close to $\sigma_n(s)\oplus 1_{k-n}$ in the Hamming distance, the Schreier graphs of $\pi_n(G_n)$ have a connected component of size $(1-2039\delta'/c)n$. Therefore the action of $\pi_n(G_n)$ is transitive on a subset $Y_n\subset \{1,\dots,k\}$ of size at least $(1-2039\delta'/c)n$. Since the action of $\pi_n(G_n')$ can only permute the connected components, it has to preserve this connected component. Altogether we get a transitive action $G_n\lacts Y_n$ whose centralizer is isomorphic to $G_n$. By \autoref{lem:left-right}, this action is isomorphic to the left action of $G_n$ on itself, and hence $|Y_n|=m_n!/2$, and the claim follows.

Now, we see that $\mc V$ is equivalent to $f_*(\mc U')$ for an ultrafilter $\mc U'$ which follows to be unique by \autoref{lem:factorial-non-equivalent}. Therefore, we conclude that the obtained homomorphism $\pi_{\mc U'}\colon \Lambda\to \Sym(f_*(\mc U'))$ satisfies that the centralizer of $\pi_{\mc U'}(\Gamma_\ell)$  is isomorphic to $\prod^{\alg}_{\mc U'} \Alt(n)$. In particular, this implies that $\prod^{\alg}_{\mc U} \Alt(n)$ is isomorphic to $\prod^{\alg}_{\mc U'} \Alt(n)$. This finishes the proof of the first statement; the final claim follows from \autoref{thm:ultra}.
\end{proof}

\begin{remark}
The proof of \autoref{thm:main} also shows that the metric ultraproducts ${\rm Sym}(\mathcal U)$ for $\mathcal U \in \mathbb U$ are pairwise non-elementarily equivalent in the sense of FO-model theory of metric structures. Indeed, if $\gamma \neq \gamma' \colon \mathbb P_{\geq 7} \to \{0,1\}$, then there exists a prime $q \in \mathbb P_{\geq 7}$, such that $\gamma(q) \neq \gamma'(q)$. Then, the sentence $\varphi(a_{q,\gamma(q)},q)$ distinguishes the centralizers of \emph{all} homomorphisms from $\Lambda$ to the respective universal sofic groups, satisfying Conditions (1) and (2) in the proof of \autoref{thm:main}. Since we did not introduce the language of FO-model theory of metric structures, we omit the details.
\end{remark}

\begin{remark} 
It is a well-known open problem in the realm of operator algebras, first formulated by Sorin Popa, to decide if there are uncountably many non-isomorphic tracial ultraproducts of matrix algebras. This question has been answered assuming the negation of the continuum hypothesis (see \cite{farah}), but there is no unconditional result. Each tracial ultraproduct of matrix algebras contains a canonical diagonal subalgebra, whose Weyl group, i.e.\ the quotient of its normalizer by its centralizer, can be shown to be isomorphic to the corresponding universal sofic group. Thus, as a consequence of our results, it follows that for pairs of ultrafilters from $\mathbb U$, the corresponding tracial ultraproducts of matrix algebras cannot be isomorphic in a way that respects the diagonal subalgebras.

We were unable to show that the diagonal subalgebra is automatically preserved in a suitable sense. However, even though this diagonal subalgebra in not a Cartan subalgebra \cite{popa83,popa1}, the size of its normalizer should make the situation very special.
\end{remark}

\section*{Acknowledgments}
The first author acknowledges funding by the Deutsche Forschungsgemeinschaft (SPP 2026 ``Geometry at infinity''). We thank Ilijas Farah for helpful comments, corrections and remarks that helped to improve the exposition. {We wholeheartedly thank the anonymous referees for their careful reading and helpful comments that improved the exposition of the paper considerably.}

\begin{bibdiv}
\begin{biblist}

\bib{MR2298607}{article}{
   author={Allsup, John},
   author={Kaye, Richard},
   title={Normal subgroups of nonstandard symmetric and alternating groups},
   journal={Arch. Math. Logic},
   volume={46},
   date={2007},
   number={2},
   pages={107--121},
}

\bib{bartholdi2023property}{article}{
      title={Property (T) and Many Quotients}, 
      author={Bartholdi, Laurent},
      author={Kassabov, Martin},
      year={2023},
      eprint={2308.14529},
      archivePrefix={arXiv},
      primaryClass={math.GR}
}

\bib{MR4634678}{article}{
   author={Becker, Oren},
   author={Chapman, Michael},
   title={Stability of approximate group actions: uniform and probabilistic},
   journal={J. Eur. Math. Soc. (JEMS)},
   volume={25},
   date={2023},
   number={9},
   pages={3599--3632},
}

\bib{benyaacov}{article}{
   author={Ben Yaacov, Ita\"{\i}},
   author={Berenstein, Alexander},
   author={Henson, C. Ward},
   author={Usvyatsov, Alexander},
   title={Model theory for metric structures},
   conference={
      title={Model theory with applications to algebra and analysis. Vol. 2},
   },
   book={
      series={London Math. Soc. Lecture Note Ser.},
      volume={350},
      publisher={Cambridge Univ. Press, Cambridge},
   },
   date={2008},
   pages={315--427},
   label={BY{\etalchar{+}}08}
}

\bib{MR4555893}{article}{
   author={Benjamini, Itai},
   author={Fraczyk, Mikolaj},
   author={Kun, G\'{a}bor},
   title={Expander spanning subgraphs with large girth},
   journal={Israel J. Math.},
   volume={251},
   date={2022},
   number={1},
   pages={155--171},
}

\bib{burgerozawathom}{article}{
   author={Burger, Marc},
   author={Ozawa, Narutaka},
   author={Thom, Andreas},
   title={On Ulam stability},
   journal={Israel J. Math.},
   volume={193},
   date={2013},
   number={1},
   pages={109--129},
}

\bib{caprace2023tame}{article}{
      title={Tame automorphism groups of polynomial rings with property (T) and infinitely many alternating group quotients}, 
      author={Caprace, Pierre-Emmanuel},
      author={Kassabov, Martin},
      year={2022},
      eprint={2210.00730},
      archivePrefix={arXiv},
      primaryClass={math.GR}
}
{
\bib{carter}{book}{
   author={Carter, Roger W.},
   title={Finite Groups of Lie Type},
   series={Pure and Applied Mathematics},
   publisher={John Wiley \& Sons, Inc.},
   address={New York},
   date={1985},
}

\bib{carter}{book}{
  author={Carter, Roger W.},
  title={Finite groups of Lie type},
  subtitle={Conjugacy classes and complex characters},
  series={Wiley Classics Library},
  publisher={John Wiley \& Sons, Ltd.},
  place={Chichester},
  date={1993},
  note={Reprint of the 1985 original},
  review={\MR{1266626}},
}

\bib{carter-semisimple}{article}{
  author={Carter, Roger W.},
  title={Centralizers of semisimple elements in finite groups of {L}ie type},
  journal={Proc. London Math. Soc. (3)},
  volume={37},
  date={1978},
  number={3},
  pages={491--507},
  review={\MR{512022}},
}}

\bib{dechiffreozawathom}{article}{
   author={De Chiffre, Marcus},
   author={Ozawa, Narutaka},
   author={Thom, Andreas},
   title={Operator algebraic approach to inverse and stability theorems for
   amenable groups},
   journal={Mathematika},
   volume={65},
   date={2019},
   number={1},
   pages={98--118},
   label={COT19}
}

\bib{deriziotis-liebeck}{article}{
  author={Deriziotis, Dimitris I.},
  author={Liebeck, Martin W.},
  title={Centralizers of semisimple elements in finite twisted groups of {L}ie type},
  journal={J. London Math. Soc. (2)},
  volume={31},
  date={1985},
  number={1},
  pages={48--54},
}

\bib{MR2178069}{article}{
   author={Elek, G\'{a}bor},
   author={Szab\'{o}, Endre},
   title={Hyperlinearity, essentially free actions and $L^2$-invariants. The
   sofic property},
   journal={Math. Ann.},
   volume={332},
   date={2005},
   number={2},
   pages={421--441},
}

\bib{thomasetal}{article}{
    author={Ellis, Paul},
    author={Hachtman, Sherwood},
    author={Schneider, Scott},
    author={Thomas, Simon},
    title={Ultraproducts of finite alternating groups},
    journal={RIMS Kokyuroku}, 
    number={1619},
    year={2008}, 
    pages={1--7},
}

\bib{farah}{article}{
   author={Farah, Ilijas},
   author={Hart, Bradd},
   author={Sherman, David},
   title={Model theory of operator algebras I: stability},
   journal={Bull. Lond. Math. Soc.},
   volume={45},
   date={2013},
   number={4},
   pages={825--838},
}

\bib{MR1107758}{article}{
   author={Felgner, Ulrich},
   title={Pseudo-endliche Gruppen},
   language={German},
   conference={
      title={Proceedings of the 8th Easter Conference on Model Theory},
      address={Wendisch-Rietz},
      date={1990},
   },
   book={
      series={Seminarberichte},
      volume={110},
      publisher={Humboldt Univ., Berlin},
   },
   date={1990},
   pages={82--96},
}

\bib{hatamigowers}{article}{
   author={Gowers, Timothy},
   author={Hatami, Omid},
   title={Inverse and stability theorems for approximate representations of
   finite groups},
   language={Russian, with Russian summary},
   journal={Mat. Sb.},
   volume={208},
   date={2017},
   number={12},
   pages={70--106},
   translation={
      journal={Sb. Math.},
      volume={208},
      date={2017},
      number={12},
      pages={1784--1817},
   },
}

\bib{MR1694588}{article}{
   author={Gromov, Mikhail},
   title={Endomorphisms of symbolic algebraic varieties},
   journal={J. Eur. Math. Soc. (JEMS)},
   volume={1},
   date={1999},
   number={2},
   pages={109--197},
}

\bib{Jech03}{book}{
  author={Jech, Thomas},
  title={Set Theory},
  publisher={Springer},
  date={2003},
}

\bib{kassabov2007}{article}{
   author={Kassabov, Martin},
   title={Symmetric groups and expander graphs},
   journal={Invent. Math.},
   volume={170},
   date={2007},
   number={2},
   pages={327--354},
}

\bib{kazhdan}{article}{
   author={Kazhdan, David},
   title={On $\varepsilon $-representations},
   journal={Israel J. Math.},
   volume={43},
   date={1982},
   number={4},
   pages={315--323},
}

\bib{kazhdan-T}{article}{
   author={Kazhdan, David},
   title={On the connection of the dual space of a group with the structure
   of its closed subgroups},
   language={Russian},
   journal={Funkcional. Anal. i Prilo\v{z}en.},
   volume={1},
   date={1967},
   pages={71--74},
}

\bib{kun2019inapproximability}{article}{
      title={Inapproximability of actions and Kazhdan's property (T)}, 
      author={Kun, G\'{a}bor},
      author={Thom, Andreas},
      year={2019},
      eprint={1901.03963},
      archivePrefix={arXiv},
      primaryClass={math.GR}
}

\bib{kun2021expanders}{article}{
      title={On sofic approximations of Property (T) groups}, 
      author={Kun, G\'{a}bor},
      year={2016},
      eprint={1606.04471},
      archivePrefix={arXiv},
      primaryClass={math.CO},
      url={https://arxiv.org/abs/1606.04471}, 
}

{\bib{MR2654085}{article}{
  author={Liebeck, Martin W.},
  author={O'Brien, E. A.},
  author={Shalev, Aner},   
  author={Tiep, Pham Huu},
  title={The Ore conjecture},
  journal={J. Eur. Math. Soc. (JEMS)},
  volume={12},
  date={2010},
  number={4},
  pages={939--1008},
}}

\bib{margulis-expanders}{article}{
   author={Margulis, Grigory},
   title={Explicit constructions of expanders},
   language={Russian},
   journal={Problemy Pereda\v ci Informacii},
   volume={9},
   date={1973},
   number={4},
   pages={71--80},
}

\bib{MR3749196}{article}{
   author={Nikolov, Nikolay},
   author={Schneider, Jakob},
   author={Thom, Andreas},
   title={Some remarks on finitarily approximable groups},
   language={English, with English and French summaries},
   journal={J. \'{E}c. polytech. Math.},
   volume={5},
   date={2018},
   pages={239--258},
}

\bib{MR3299505}{article}{
   author={P\u{a}unescu, Liviu},
   title={All automorphisms of the universal sofic group are
   class-preserving},
   journal={Rev. Roumaine Math. Pures Appl.},
   volume={59},
   date={2014},
   number={2},
   pages={255--263},
}

\bib{MR2460675}{article}{
   author={Pestov, Vladimir G.},
   title={Hyperlinear and sofic groups: a brief guide},
   journal={Bull. Symbolic Logic},
   volume={14},
   date={2008},
   number={4},
   pages={449--480},
}

\bib{popa1}{article}{
   author={Popa, Sorin},
   title={Independence properties in subalgebras of ultraproduct $\rm II_1$
   factors},
   journal={J. Funct. Anal.},
   volume={266},
   date={2014},
   number={9},
   pages={5818--5846},
}

\bib{popa83}{article}{
   author={Popa, Sorin},
   title={Orthogonal pairs of $\ast $-subalgebras in finite von Neumann
   algebras},
   journal={J. Operator Theory},
   volume={9},
   date={1983},
   number={2},
   pages={253--268},
}

{\bib{MO302743}{misc}{
   author={Sapir, Mark},
   title={The symmetric group theory of natural numbers},
   date={2018-06-14},
   note={MathOverflow, Question 302743. \url{https://mathoverflow.net/questions/302743/the-symmetric-group-theory-of-natural-numbers}},}
}

\bib{MR4232187}{article}{
   author={Schneider, Jakob},
   author={Thom, Andreas},
   title={A note on the normal subgroup lattice of ultraproducts of finite
   quasisimple groups},
   journal={Proc. Amer. Math. Soc.},
   volume={149},
   date={2021},
   number={5},
   pages={1929--1942},
}

\bib{MR4141378}{article}{
   author={Schneider, Jakob},
   title={Isomorphism questions for metric ultraproducts of finite
   quasisimple groups},
   journal={J. Group Theory},
   volume={23},
   date={2020},
   number={5},
   pages={745--779},
}

\bib{MR4292938}{article}{
   author={Schneider, Jakob},
   author={Thom, Andreas},
   title={Word images in symmetric and classical groups of Lie type are
   dense},
   journal={Pacific J. Math.},
   volume={311},
   date={2021},
   number={2},
   pages={475--504},
}

\bib{MR3162821}{article}{
   author={Stolz, Abel},
   author={Thom, Andreas},
   title={On the lattice of normal subgroups in ultraproducts of compact
   simple groups},
   journal={Proc. Lond. Math. Soc. (3)},
   volume={108},
   date={2014},
   number={1},
   pages={73--102},
}

\bib{MR3966829}{article}{
   author={Thom, Andreas},
   title={Finitary approximations of groups and their applications},
   conference={
      title={Proceedings of the International Congress of
      Mathematicians---Rio de Janeiro 2018. Vol. III. Invited lectures},
   },
   book={
      publisher={World Sci. Publ., Hackensack, NJ},
   },
   date={2018},
   pages={1779--1799},
}

\bib{MR3210125}{article}{
   author={Thom, Andreas},
   author={Wilson, John},
   title={Metric ultraproducts of finite simple groups},
   journal={C. R. Math. Acad. Sci. Paris},
   volume={352},
   date={2014},
   number={6},
   pages={463--466},
}

\bib{MR3846318}{article}{
   author={Thom, Andreas},
   author={Wilson, John},
   title={Some geometric properties of metric ultraproducts of finite simple
   groups},
   journal={Israel J. Math.},
   volume={227},
   date={2018},
   number={1},
   pages={113--129},
}

\bib{MR2607888}{article}{
   author={Thomas, Simon},
   title={On the number of universal sofic groups},
   journal={Proc. Amer. Math. Soc.},
   volume={138},
   date={2010},
   number={7},
   pages={2585--2590},
}

\bib{MR1803462}{article}{
   author={Weiss, Benjamin},
   title={Sofic groups and dynamical systems},
   note={Ergodic theory and harmonic analysis (Mumbai, 1999)},
   journal={Sankhy\={a} Ser. A},
   volume={62},
   date={2000},
   number={3},
   pages={350--359},
}

\bib{MR1477188}{article}{
   author={Wilson, John},
   title={First-order group theory},
   conference={
      title={Infinite groups 1994 (Ravello)},
   },
   book={
      publisher={de Gruyter, Berlin},
   },
   date={1996},
   pages={301--314},
}

\bib{MR3710760}{article}{
   author={Wilson, John},
   title={Metric ultraproducts of classical groups},
   journal={Arch. Math. (Basel)},
   volume={109},
   date={2017},
   number={5},
   pages={407--412},
}
{
\bib{wilson-radical}{article}{
  author={Wilson, John S.},
  title={First-order characterization of the radical of a finite group},
  journal={J. Symbolic Logic},
  volume={74},
  date={2009},
  number={4},
  pages={1429--1435},
}}

\end{biblist}
\end{bibdiv}

\end{document}